\documentclass[11pt]{article} 
\usepackage{enumitem}
\usepackage{amssymb, latexsym, amsmath}
\usepackage{mathtools}
\usepackage[dvipsnames]{xcolor}
\pagestyle{plain}

\setlength{\textwidth}{15cm}
\setlength{\topmargin}{0cm}
\setlength{\headheight}{0cm}
\setlength{\headsep}{0cm}
\setlength{\topskip}{0cm}
\setlength{\textheight}{21cm}
\setlength{\oddsidemargin}{0.5cm}
\setlength{\evensidemargin}{0.5cm}
\parindent 0cm

\newtheorem{defin}{}
\newtheorem{saetze}[defin]{}
\newtheorem{conjec}[defin]{}
\newtheorem{lemmas}[defin]{}
\newtheorem{folger}[defin]{}
\newtheorem{bemerk}[defin]{}
\newtheorem{beispi}[defin]{}
\newtheorem{propos}[defin]{}

\newenvironment{definition}{\begin{defin}\it {\bf Definition:}}{\end{defin}}
\newenvironment{theorem}  {\begin{saetze}\it {\bf Theorem:}}{\end{saetze}}

\newenvironment{lemma}    {\begin{lemmas}\it {\bf Lemma:}}{\end{lemmas}}

\newenvironment{example}{\begin{beispi}\it {\bf Example:}}{\end{beispi}}
\newenvironment{proposition}{\begin{propos}\it {\bf Proposition:}}{\end{propos}}
\newenvironment{proof}    {{\it Proof}:}{{\hfill \fillbox \bigskip}}

\newcommand{\fillbox}{\mbox{$\bullet$}}
\newcommand{\ra}{\rightarrow}

\newcommand{\Lra}{\Leftrightarrow}
\newcommand{\ms}{\mapsto}

\newcommand{\N}{\mathbb N}

\newcommand{\Z}{\mathbb Z}

\renewcommand{\P}{\mathcal P}

\newenvironment{items}{\begin{list}{$\alph{item})$}
{\labelwidth18pt \leftmargin18pt \topsep3pt \itemsep1pt \parsep0pt}}
{\end{list}}

\begin{document}

\title{The word problem for 
       polycyclic groups and nilpotent associative algebras}
\author{Tobias Moede$^{1}$ and Matthias Neumann-Brosig$^{1,2}$}
\date{\today}
\maketitle

\begin{abstract}
The word problem is an old and central problem in (computational) group theory.
It is well-known that the word problem is undecidable in general, but
decidable for specific types of presentations. Consistent polycyclic 
presentations are an important class of group presentations with solvable
word problem. These presentations play a fundamental role in the algorithmic
theory of polycyclic groups. Problems analogous to the word 
problem arise when computing with other algebraic structures. Various aspects of 
this topic are considered in the literature. The aim of this paper is to provide 
a general approach to the topic including polycyclic groups and nilpotent associative 
algebras as examples.
\end{abstract}

\footnotetext[1]{Institut f\"ur Analysis und Algebra, Technische Universit\"at Braunschweig, Germany}
\footnotetext[2]{IAV GmbH, Gifhorn/Berlin, Germany}
\let\thefootnote\relax\footnotetext{E-mail addresses: t.moede@tu-braunschweig.de, m.neumann-brosig@tu-braunschweig.de}

\section{Introduction}
\subsection{The word problem}
A \emph{presentation} of a group $G$ consists of a free group $F$, a set $X$ of free generators of $F$ together with a set of pairs
$R \subseteq F \times F$ such that $G \cong F/\langle \{ ab^{-1}\mid (a,b) \in R \} \rangle^F$ holds. This presentation is denoted by $\langle X \mid R \rangle$, 
and the elements of $R$ are called \emph{defining relations}. We often write a defining relation $(a,b)$ in the form $a=b$. A presentation $\langle X \mid R \rangle$ 
is called a \emph{finite presentation} if both $X$ and $R$ are finite sets; see \cite[Chapter 2]{Rob82} for more details.

\medskip
A fundamental problem in the algorithmic theory of finitely presented groups is the {\em word problem}, introduced by Dehn \cite{Deh11} in 1911 as the ``identity problem'': 
Given a finite presentation $\langle X \mid R \rangle$ of a group $G$ and
a word $w$ in $X$, is there an algorithm to decide
whether $w$ denotes the identity element in $G$, that is, $w \in \langle \{ ab^{-1}\mid(a,b) \in R \} \rangle^F$ holds? 
In the 1950s Novikov \cite{Nov55} and Boone \cite{MR101267} proved that the word problem is undecidable in general.
However, the word problem is decidable for specific types of presentations, for instance in the setting of polycyclic groups.  

\subsection{Polycyclic groups and polycyclic presentations}
A group $G$ is {\em polycyclic} if it has a {\em polycyclic series}; 
that is, a subnormal series with cyclic quotients. A \emph{polycyclic presentation} is a presentation on finitely many
generators 
$g_1, \ldots, g_n$, say, together with $r_1, \ldots, r_n \in \N \cup \{\infty\}$ and
integers $e_{i,k}, a_{i,j,k}, b_{i,j,k}$ for $1 \leq i < j \leq n$ and $1 \leq k \leq n$, with $0 \leq e_{i,k}, a_{i,j,k}, b_{i,j,k} < r_k$ 
for all $k$ such that $r_k < \infty$ and relations of the form
\begin{align*}
g_i^{r_i} &= g_{i+1}^{e_{i,i+1}} \cdots g_n^{e_{i,n}} 
              &&\mbox{ for } 1 \leq i \leq n \mbox{ with } r_i < \infty,  \tag{GR1} \label{GR1}\\
g_j g_i &= g_i g_{i+1}^{a_{i,j,i+1}} \cdots g_n^{a_{i,j,n}} 
              &&\mbox{ for } 1 \leq i < j \leq n, \mbox{ and } \tag{GR2} \\
g_j g_i^{-1} &= g_i^{-1} g_{i+1}^{b_{i,j,i+1}} \cdots g_n^{b_{i,j,n}} 
              &&\mbox{ for } 1 \leq i < j \leq n \mbox{ with } r_i = \infty. \tag{GR3}
\end{align*}
We denote the resulting presentation by $\P_r(e,a,b)$ and the group
defined by this presentation by $G_r(e,a,b)$.

From $\P_r(e,a,b)$, it is possible to calculate integers $c_{i,j,k},d_{i,j,k},f_{i,k}$ (with $0 \leq c_{i,j,k},d_{i,j,k},f_{i,k} < r_k$ for all $k$ such that $r_k < \infty$) such that the relations
\begin{align*}
 g_j^{-1} g_i &= g_i g_{i+1}^{c_{i,j,i+1}} \cdots g_n^{c_{i,j,n}} 
              &&\mbox{ for } 1 \leq i < j \leq n \mbox{ with } r_j = \infty \mbox{ and } \tag{GR4}\\
g_j^{-1} g_i^{-1} &= g_i^{-1} g_{i+1}^{d_{i,j,i+1}} \cdots g_n^{d_{i,j,n}} 
              &&\mbox{ for } 1 \leq i < j \leq n \mbox{ with } r_i = \infty, r_j = \infty \tag{GR5} \label{GR5}\\
g_i^{-1} &= g_i^{r_i-1} g_{i+1}^{f_{i,i+1}} \cdots g_n^{f_{i,n}} 
              &&\mbox{ for } 1 \leq i  \leq n \mbox{ with } r_i < \infty \tag{GR6} \label{GR6}
\end{align*}
hold in $G_r(e,a,b)$; see \cite[Chapter 9.4]{Sim94} for details.

Let $M$ be the set of words of finite length in the alphabet $\{g_1,...,g_n, g_1^{-1},...,g_n^{-1}\}$, with concatenation as multiplication, i.e. $M$ is the free monoid on
$\{g_1,...,g_n, g_1^{-1},...,g_n^{-1}\}$.
\emph{Collection} is a process that, given an input $w \in M$, substitutes subwords of $w$ equal to the left side of a relation \eqref{GR1} - \eqref{GR6} by the right side of the same relation. 
Also, any subword of the form $g_ig_i^{-1}$, $g_i^{-1}g_i$ can be replaced by the empty word.
If all of these replacements are impossible, the word is said to be in \emph{reduced form}. 

The presentation $\P_r(e,a,b)$ is called 
\emph{consistent} if and only if each element of $G_r(e,a,b)$ is represented by a unique reduced form. 
Collection for a polycyclic presentation terminates after finitely many steps in a reduced form, so if the presentation $\P_r(e,a,b)$ is consistent, the collection 
process is a solution of the word problem for $G_r(e,a,b)$. 

Consistency of polycyclic presentations has been studied by various authors.
Bayes, Kautsky \& Wamsley \cite{Wam74b}, Wamsley \cite{Wam74a}, Vaughan-Lee \cite{VL84},
Sims \cite[page 447]{Sim94} and Newman, Nickel \& Niemeyer \cite{NNN98} 
describe methods for nilpotent groups or, more specialised, for 
$p$-groups. Sims \cite[page 424]{Sim94} also exhibits a method for polycyclic
groups in general; this approach is based on monoid presentations and its
proof relies on the theory of confluent rewriting systems.

The \emph{collection to the left} algorithm always replaces 
the leftmost occurence of a non-reduced subword containing a generator with the lowest possible index (where the lower index has priority).
Thus, collection to the left induces an function $c\colon M \rightarrow M$ that maps $w$ 
to the reduced form of $w$ calculated by the collection to the left algorithm. We note that $c$ does not necessarily induce a function of the free group on $\{g_1,...,g_n\}$, as shown in Example \ref{exa::groups::eq_04_fails}.

For $x = (x_1, \ldots, x_n) \in \Z^n$ we write $g^x = g_1^{x_1} \cdots 
g_n^{x_n}$ to shorten notation. Similarly, we write $g^{a_{i,j}} = 
g_{i+1}^{a_{i,j,i+1}} \cdots g_n^{a_{i,j,n}}$ for $i < j$ as a short 
form for the right hand side of the first type of conjugate relation 
of $\P_r(e,a,b)$ and $g^{b_{i,j}}$ for the second type. Further, 
we define $g^{e_i} = g_{i+1}^{e_{i,i+1}} \cdots g_n^{e_{i,n}}$.

The first contribution of this paper is the following result, proven in Section \ref{subsec::groups}:

\begin{theorem}\label{thm::polpre}
The presentation $\P_r(e,a,b)$ is consistent if and only if the
following equations, called test equations, hold:
\begin{align*}
c(g_j g^{a_{j,k}} g_i) &= c(g_k g_j g_i) & (k > j > i)
    \label{eq::test1}\tag{G1}\\
c(g_j^{r_j} g_i)  &= c(g^{e_j} g_i) &(r_j < \infty, \, j > i) 
    \label{eq::test2}\tag{G2}\\
c(g_j g_i^{r_i}) &= c(g_j g^{e_i}) &(r_i < \infty, \, j > i)
    \label{eq::test3}\tag{G3}\\
c(g_j g_i^{-1} g_i) &= c(g_j) & (r_i = \infty, \, j > i)
    \label{eq::test4}\tag{G4}\\
c(g_i^{r_i+1}) &= c(g_i g^{e_i}) & (r_i < \infty)
    \label{eq::test5}\tag{G5}
\end{align*}
\end{theorem}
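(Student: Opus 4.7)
The plan is to treat the two directions separately. The forward direction is immediate: in a consistent presentation, $c$ sends every word in $M$ to the unique reduced form of the group element it represents, so any two words that are equal in $G_r(e,a,b)$ have the same image under $c$. Each of (G1)--(G5) equates $c$ of two words that plainly represent the same group element (differing by a single application of one of the defining relations), so consistency forces the equations to hold.

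For the converse I plan to view $\P_r(e,a,b)$ as a rewriting system on $M$ whose rules are the derived forms (GR1)--(GR6) together with the trivial cancellations $g_ig_i^{-1}\to 1$ and $g_i^{-1}g_i\to 1$, and to apply a Diamond-Lemma style confluence argument. The first step is to exhibit a well-founded termination order on $M$ that strictly decreases under each collection step; a suitable choice is a tuple that records, for each generator index $k$ in ascending order, the position and multiplicity of occurrences of $g_k^{\pm 1}$, ordered lexicographically, so that moving a lower-index generator to the left decreases the weight even when the total length temporarily grows. This already shows that $c$ is well-defined and terminates in a reduced form.

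With termination in hand, the standard induction reduces uniqueness of reduced forms to the resolvability of all critical pairs. Concretely, assuming every word strictly smaller than $w$ in the termination order has a unique reduced form, I need to show that whenever $w$ admits two overlapping collection steps the two resulting words collect to a common reduced form; the induction hypothesis then glues together non-overlapping reductions. A case analysis on the left-hand sides of the rewrite rules identifies exactly five families of critical pairs, corresponding to (G1)--(G5): the overlap of two conjugate relations on $g_kg_jg_i$ for $k>j>i$; overlaps of a power relation with a conjugate relation at $g_j^{r_j}g_i$ and at $g_jg_i^{r_i}$; an overlap of an inverse cancellation with a conjugate relation at $g_jg_i^{-1}g_i$; and the self-overlap of a power relation at $g_i^{r_i+1}$. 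The hypothesised test equations assert exactly that each of these pairs converges under $c$, which closes the induction.

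The main obstacle will be the combinatorial bookkeeping of overlaps: several further candidate critical pairs formally arise from the derived rules (GR4)--(GR6), but since those rules are themselves consequences of (GR1)--(GR2), one has to verify that their overlaps reduce to the five families above rather than giving rise to new independent conditions. A secondary difficulty is the termination argument itself in the presence of infinite $r_i$, since a single collection step of type (GR2) or (GR3) can increase word length before it decreases it; making the lexicographic weight sketched above work uniformly, and confirming that it is respected by each of the six rewrite types and by both trivial cancellations, is where most of the technical labour will sit.
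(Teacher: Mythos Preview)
Your plan is the classical rewriting-system route, essentially the Sims argument the paper cites. It is viable, but it is \emph{not} the paper's proof: the paper's stated purpose is precisely to replace that confluence argument by a purely group-theoretic one via cyclic extensions. The paper inducts on the number of generators: assuming $\P^{[2]}_r(e,a,b)$ is consistent, Equations (G1)--(G2) with $i=1$ force $g_j\mapsto g^{a_{1,j}}$ to induce an endomorphism $\sigma$ of $H^{[2]}$; then (G3) and (G5) in the case $r_1<\infty$, respectively (G4) in the case $r_1=\infty$, supply exactly the extension data required by Lemma~\ref{lem::cycextgrp} (that $\sigma$ is an automorphism, that $\sigma^{r_1}$ is inner by $g^{e_1}$, that $\sigma$ fixes $g^{e_1}$, or that $g_j\mapsto g^{b_{1,j}}$ induces $\sigma^{-1}$), whence $H^{[1]}$ is the correct cyclic extension of $H^{[2]}$ and consistency of $\P^{[1]}_r(e,a,b)$ follows from Lemma~\ref{lem::grpmono}. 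No termination order and no critical-pair analysis appear.

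On the trade-off: your Diamond-Lemma strategy makes the combinatorial origin of the five overlap patterns transparent, but the obstacle you flag is real and is where most of the Sims-style work sits --- the fact that (GR4)--(GR6) are group-theoretic consequences of (GR1)--(GR3) does \emph{not} by itself resolve their critical pairs in the monoid rewriting system, so you still owe a direct verification that every overlap involving a derived rule or an inverse generator collapses to one of (G1)--(G5). The paper's method bypasses this entirely, because it works with homomorphisms of the already-consistent $H^{[2]}$ (where inverses and the derived relations come for free) rather than with rewrite rules on a free monoid; the price is that the correspondence between each test equation and the relevant piece of extension data must be checked one at a time by explicit collection computations.
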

We note that the result is similar to that of Sims \cite[page 424]{Sim94}, which has a proof based on rewriting systems. 
In contrast, the proof Theorem \ref{thm::polpre} is purely group-theoretic, applying the theory of cyclic group extensions.

In case the polycyclic group is nilpotent, it is possible to check consistency on a subset of these equations if additional information in 
the form of a weight function is available and the relations have a certain form. The details of these nilpotent presentations are discussed 
in Section \ref{subsec::nilpotent}.

\subsection{Nilpotent associative algebras and nilpotent presentations}

Problems similar to the word problem play an important role for computing with other algebraic structures, e.g. nilpotent associative algebras 
over fields or the integers. As for groups, we are able to solve these problems for special types of presentations.

Let $R$ either be a field or the ring of integers. Then an associative $R$-algebra is an $R$-module
equipped with an associative, bilinear multiplication. For the remainder of this paper, we will 
assume all algebras to be associative.

A {\em presentation} for an algebra $A$ consists of a free algebra $F$, a set of free generators $X$ of $F$ together
with a set of pairs $R \subseteq F \times F$ such that $A \cong F/\langle\{ a-b \mid (a,b) \in R\} \rangle^F$ holds.
In analogy to the group case we denote such a presentation as $\langle X \mid R \rangle$ and call the elements of
$R$ {\em defining relations}, again often written as $a=b$. A presentation is called a {\em finite presentation}
if both $X$ and $R$ are finite sets.

For a finitely presented $R$-algebra $A$ the power ideal $A^i$ is the ideal generated by
all products of length $i$. Then $A$ is called nilpotent if there is $c\in \N$ with
$A = A^1 \geq A^2 \geq \ldots \geq A^c \geq A^{c+1} = \{0\}$.

A \emph{nilpotent presentation} for a finitely presented nilpotent algebra $A$ is a presentation
on finitely many generators $a_1, \ldots, a_n$, say, together with
$r_1, \ldots,r_n \in \N \cup \{ \infty \}$ and $e_{i,k}, b_{i,j,k} \in R$ for $1 \leq 
i < j \leq n$ and $1 \leq k \leq n$, with $0 \leq e_{i,k},  b_{i,j,k} < r_k$ for all $k$ such that $r_k < \infty$ and relations of the form
\begin{align*}
r_i a_i &= e_{i,i+1} a_{i+1} + \ldots + e_{i,n} a_n 
            &&\mbox{ for } 1 \leq i \leq n \mbox{ with } r_i < \infty, \tag{AR1} \label{AR1}\\
a_j a_i &= b_{i,j,\ell+1} a_{\ell+1} + \ldots + b_{i,j,n} a_n 
            &&\mbox{ for } 1 \leq i, j \leq n \mbox{ and } \ell = \max\{i,j\}. \tag{AR2} \label{AR2}
\end{align*}
We denote the resulting presentation by $\P_r(e,b)$ and the algebra
defined by this presentation by $A_r(e,b)$. If $R$ is a field, then the relations 
with left hand side $a_j a_i$ are sufficient to define $A_r(e,b)$. In this case we 
can choose $r_i = \infty$ for $1 \leq i \leq n$.

Collection for nilpotent algebras is a process that, given a linear combination of words
in the generators $\{a_1,\ldots,a_n\}$, produces a reduced form, i.e. a word in the generators
of the form $x_1 a_1 + \ldots + x_n a_n$ with $x_i \in R$ and $0 \leq x_i < r_i$ if
$r_i < \infty$. The presentation $\P_r(e,b)$ is called {\em consistent} if and only if each element of $A_r(e,b)$ 
is represented by a unique reduced form. 

Consistency of nilpotent presentations for nilpotent algebras has been studied in 
\cite{Eic07} for fields, and more explicitly for the integers in \cite{EiMo20}. In particular 
the implementation \cite{PKGzalgs} of the nilpotent quotient algorithm described in \cite{EiMo20}
uses consistency checks.

We describe a specific collection algorithm for nilpotent algebras in Section \ref{sec:naalgs}. This algorithm 
induces a function $c$ of the free algebra on $\{a_1,\ldots,a_n\}$ that maps an element to one of its reduced 
forms. Our second main result is the following theorem, which constitutes a consistency check for nilpotent algebras.

\begin{theorem} \label{algconst}
Suppose that $\P_r(e,b)$ is a nilpotent presentation and let $w$ be a
weight function for it. Then $\P_r(e,b)$ is consistent if and only if
the following equations, called test equations, hold: 
\begin{align*}
c(a_k c(a_j a_i)) &= c(c(a_k a_j) a_i) 
    & (w(a_k)+w(a_j)+w(a_i) \leq d)
    \label{nileqalg::test1}\tag{A1}\\    
c(r_j c(a_j a_i))  &= c( c(r_j a_j) a_i)
    & (r_j < \infty, \, w(a_j)+w(a_i) \leq d) 
    \label{nileqalg::test2}\tag{A2} \\
c(r_i c(a_j a_i )) &= c( a_j c( r_i a_i))
    & (r_i < \infty, \, w(a_j)+w(a_i) \leq d)
    \label{nileqalg::test3}\tag{A3}\\
\end{align*}
\end{theorem}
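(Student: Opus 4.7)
The plan is to prove both directions, with the forward implication essentially trivial and the converse requiring the construction of a normal-form algebra from the test equations.

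For the forward direction, consistency of $\P_r(e,b)$ means each element of $A_r(e,b)$ has a unique reduced form. For every test equation both sides represent the same element of $A_r(e,b)$---by associativity of algebra multiplication for \eqref{nileqalg::test1}, and by obvious compatibility of scalar multiplication with the algebra product for \eqref{nileqalg::test2} and \eqref{nileqalg::test3}---so applying $c$ yields the same reduced form on both sides.

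For the converse I would construct a candidate algebra $\tilde A$ as the $R$-module generated by $a_1, \ldots, a_n$ modulo the relations \eqref{AR1}, equipped with the $R$-bilinear product defined on generators by the right-hand sides of \eqref{AR2}. The test equations \eqref{nileqalg::test2} and \eqref{nileqalg::test3} state precisely that multiplication by $r_j$ on the left, respectively by $r_i$ on the right, is compatible with the relations defining the module, so the product descends to a well-defined bilinear operation on $\tilde A$. Equation \eqref{nileqalg::test1} asserts associativity on triples of generators, and bilinearity propagates this to associativity on all of $\tilde A$. Hence $\tilde A$ is an associative $R$-algebra satisfying \eqref{AR1} and \eqref{AR2}, so by the universal property of the presentation there is a surjective algebra homomorphism $\pi : A_r(e,b) \to \tilde A$. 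On the other hand, collection yields a section $s: A_r(e,b) \to \tilde A$ sending each algebra element to its reduced form; since $\pi \circ s$ is the identity on the generating set $\{a_1,\ldots,a_n\}$, $\pi$ is an isomorphism and every element of $A_r(e,b)$ has a unique reduced form.

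The main obstacle is showing that $s$ is actually well-defined, i.e.\ that collection produces the same reduced form for any two free-algebra representatives of a single element of $A_r(e,b)$---a disguised version of consistency itself. To close this loop I would argue by induction on the weight, exploiting that $w(a_k)+w(a_j)+w(a_i) \leq d$ in the test equations corresponds to the nilpotency degree $d$, so products of higher weight vanish and the induction terminates. Within each weight level, any single application of a defining relation during collection reduces, after expansion into weight-homogeneous terms, to an instance of \eqref{nileqalg::test1}, \eqref{nileqalg::test2}, or \eqref{nileqalg::test3}, so the test equations suffice to equate the outcomes of any two collection paths. The explicit collection algorithm of Section \ref{sec:naalgs} is essential here, since the argument depends on $c$ interacting predictably with the weight filtration.
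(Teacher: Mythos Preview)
Your overall strategy---build the ``normal-form algebra'' $\tilde A$ directly, check it is associative using (A1)--(A3), and compare with $A_r(e,b)$ via the universal map---is sound and is genuinely different from the paper's proof. The paper proceeds by induction along the chain of subpresentations $\P^{[i]}_r(e,b)$: assuming $\P^{[i+1]}_r(e,b)$ is consistent, it defines left- and right-multiplication maps $\varphi_L,\varphi_R$ on $A^{[i+1]}$, uses the test equations to verify the axioms of the cyclic-extension Lemma~\ref{lem::cycextalg}, and concludes that $A^{[i]}$ is an extension of $A^{[i+1]}$, hence consistent. Your construction is more elementary (no extension theory), and the paper's buys a uniform parallel with the polycyclic-group case.

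Two points need fixing. First, you never explain why it suffices to assume only the test equations with bounded weight. In your $\tilde A$ you need associativity $(a_k a_j)a_i=a_k(a_j a_i)$ for \emph{all} triples, not just those with $w(a_k)+w(a_j)+w(a_i)\le d$; the missing triples are handled because the weight conditions on the structure constants force both sides to be zero when the total weight exceeds $d$. This deserves one sentence, not a vague allusion in the last paragraph.

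Second, and more seriously, your closing argument is backwards. A map $s:A_r(e,b)\to\tilde A$ ``sending each element to its reduced form'' is, as you yourself note, ill-defined unless consistency is already known, and the weight induction you sketch to repair this is both vague and unnecessary. The standard fix is to run the section the other way: define $\sigma:\tilde A\to A_r(e,b)$ by sending each reduced form (which is literally what an element of $\tilde A$ is) to the element it represents. This is trivially well-defined, satisfies $\pi\circ\sigma=\mathrm{id}_{\tilde A}$, and is surjective because collection terminates and every element of $A_r(e,b)$ has \emph{some} reduced form. Hence $\sigma$ is a bijection and $\pi$ is an isomorphism---no circularity, no induction on weight.
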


In the above theorem $w$ is a \emph{weight function} $w \colon \{a_1, \ldots, a_n\} \ra \N,\; a_i \ms w(a_i)$,
so that each image $w(a_i)$ is minimal subject to the conditions $w(a_k) \geq w(a_i)$ for each $i,k$ with 
$e_{i,k} \neq 0$, and $w(a_k) \geq w(a_i)+w(a_j)$ for each $i,j,k$ with $b_{i,j,k} \neq 0$.
Additionally, we define the weight of a reduced word in the generators as the minimum of the weights of the 
occuring generators. The empty word is assigned weight $\infty$, and we set $d = \max \{w(a_1),...,w(a_n)\}$.
\section{Consistency for polycyclic group presentations}

Given $G_r(e,a,b)$, we
define subgroups $G_i = \langle g_i, \ldots, g_n \rangle$ and 
indices $s_i = [G_i : G_{i+1}]$. Then $s_i \in \N \cup \{ \infty \}$
for $1 \leq i \leq n$. The sequence $s = (s_1, \ldots, s_n)$ are the
{\em indices} of $\P_r(e,a,b)$ and the sequence $r = (r_1, \ldots, r_n)$
are the {\em relative orders} of this presentation. The next lemma
follows readily from the relations of the presentation $\P_r(e,a,b)$.

\begin{lemma} \label{lem::polycyclic_series_group}
The series $G_r(e,a,b) = G_1 \geq G_2 \geq \ldots \geq G_n \geq 
G_{n+1} = \{1\}$ is a polycyclic series through $G$ satisfying $s_i =
[G_i:G_{i+1}] \leq r_i$ for $1 \leq i \leq n$. In particular, $G_r(e,a,b)$ 
is polycyclic.
\end{lemma}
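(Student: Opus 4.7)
The plan is to verify the three properties defining a polycyclic series (subnormality, cyclic quotients, and the index bound) directly from the defining relations of $\P_r(e,a,b)$, treating each $G_i = \langle g_i, \ldots, g_n \rangle$ in turn. Polycyclicity of $G_r(e,a,b)$ will then follow from the existence of such a series.

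For subnormality, I would show that $G_{i+1} \trianglelefteq G_i$ for $1 \leq i \leq n$. Since $G_i = \langle g_i \rangle G_{i+1}$, it suffices to show that conjugation by $g_i^{\pm 1}$ maps each generator $g_j$ ($j > i$) of $G_{i+1}$ back into $G_{i+1}$. The relations (GR2) rewrite $g_j g_i$ as $g_i \cdot g^{a_{i,j}}$, where $g^{a_{i,j}}$ is a word in $g_{i+1}, \ldots, g_n$; hence $g_i^{-1} g_j g_i = g^{a_{i,j}} \in G_{i+1}$. The relations (GR3) (when $r_i = \infty$) and the derived (GR4), (GR5) give the analogous conclusion for $g_i^{-1} g_j (g_i^{-1})^{-1}$. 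If $r_i < \infty$, one instead uses (GR6) to rewrite $g_i^{-1}$ as a word in $g_i, g_{i+1}, \ldots, g_n$ to obtain conjugation of $g_j$ by $g_i^{-1}$ as an element of $G_{i+1}$. This yields the chain of normal inclusions.

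For the cyclic quotients, the quotient $G_i / G_{i+1}$ is generated by the single coset $g_i G_{i+1}$, since $G_i = \langle g_i, G_{i+1}\rangle$; hence it is cyclic. For the index bound, if $r_i = \infty$ there is nothing to prove. If $r_i < \infty$, relation (GR1) gives $g_i^{r_i} = g^{e_i} \in G_{i+1}$, so the image of $g_i$ in $G_i / G_{i+1}$ has order dividing $r_i$, and therefore $s_i = [G_i : G_{i+1}] \leq r_i$.

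Finally, the descending chain $G_1 \geq G_2 \geq \ldots \geq G_{n+1} = \{1\}$ with each $G_{i+1}$ normal in $G_i$ and each quotient cyclic is by definition a polycyclic series, so $G_r(e,a,b)$ is polycyclic. There is no substantive obstacle here; the only care required is to handle the four sign/order cases ($r_i$ finite vs.\ infinite, conjugation by $g_i$ vs.\ $g_i^{-1}$) so that all conjugates of generators of $G_{i+1}$ are visibly expressed inside $G_{i+1}$ using one of the relations (GR2)--(GR6).
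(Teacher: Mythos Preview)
Your proposal is correct and matches the paper's approach: the paper does not give a detailed argument but simply states that the lemma ``follows readily from the relations of the presentation $\P_r(e,a,b)$,'' which is precisely the direct verification you outline. Your handling of the subnormality (via (GR2)--(GR6)), the cyclicity of the quotients, and the index bound from (GR1) is exactly what is intended.
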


\begin{definition}
The presentation $\P_r(e,a,b)$ is called {\em consistent} if $s_i = r_i$
holds for $1 \leq i \leq n$. Equivalently, $\P_r(e,a,b)$ is consistent if
and only if each element of $G_r(e,a,b)$ is represented by a unique reduced 
form. 
\end{definition}

\subsection{Nilpotent groups}
\label{subsec::nilpotent}
A finitely generated nilpotent group has a central series with cyclic 
quotients. Hence such a group is polcyclic and
it has a polycyclic series that is a central series. A nilpotent 
presentation reflects this: it is a polycyclic presentation such that 
the conjugate
relations have a special form: $g^{a_{i,j}} = g_j g_{j+1}^{a_{i,j,j+1}} \cdots
g_n^{a_{i,j,n}}$ and $g^{b_{i,j}} = g_j g_{j+1}^{b_{i,j,j+1}} \cdots 
g_n^{b_{i,j,n}}$. 

\begin{lemma}
If $\P_r(e,a,b)$ is a nilpotent presentation, then the polycyclic series
$G_r(e,a,b) = G_1 \geq G_2 \geq \ldots \geq G_n \geq G_{n+1} = \{1\}$ is
a central series through $G_r(e,a,b)$. In particular, $G_r(e,a,b)$ is
nilpotent.
\end{lemma}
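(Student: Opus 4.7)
The plan is to prove by downward induction on $i$, from $i = n$ to $i = 1$, the pair of statements $G_{i+1} \trianglelefteq G$ and $G_i/G_{i+1} \leq Z(G/G_{i+1})$. Lemma~\ref{lem::polycyclic_series_group} already supplies the polycyclic series structure, so only the extra centrality needs to be added.

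The algebraic engine is the special form of the conjugate relations in a nilpotent presentation. For $1 \leq i < j \leq n$, (GR2) reads
\[
g_j g_i \;=\; g_i g_j \cdot g_{j+1}^{a_{i,j,j+1}} \cdots g_n^{a_{i,j,n}},
\]
with the trailing factor lying in $G_{j+1}$. Hence $g_j g_i$ and $g_i g_j$ differ by an element of $G_{j+1} \subseteq G_{i+1}$; the same rearrangement with the roles of $i$ and $j$ interchanged handles pairs with $k < i$. The analogous identities involving $g_k^{-1}$ follow either directly from the nilpotent form of (GR3) when $r_k = \infty$, or, when $r_k$ is finite, by substituting the expression for $g_k^{-1}$ from (GR6), iterating (GR2), and absorbing the leading $g_k^{r_k}$ via (GR1); this last step invokes normality of $G_{k+1}$, which by that stage of the induction is already secured.

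The base case $i = n$ is essentially free: since $G_{n+1} = \{1\}$, every trailing product collapses and $g_n$ literally commutes with every other generator, so $G_n = \langle g_n \rangle \leq Z(G)$. For the inductive step, assume $G_{i+1} \trianglelefteq G$. The identities above give $g_i g_k \in g_k g_i \cdot G_{i+1}$ and likewise for $g_k^{-1}$, so the image of $g_i$ in $G/G_{i+1}$ commutes with the image of every generator and hence lies in $Z(G/G_{i+1})$. Because $G_i = \langle g_i, G_{i+1}\rangle$, this yields $G_i/G_{i+1} \leq Z(G/G_{i+1})$; in particular $G_i \trianglelefteq G$, closing the induction. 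I expect the only real obstacle to be the bookkeeping around the inverse-generator cases when $r_k < \infty$, which forces one to interleave the normality and centrality statements in a single induction rather than prove them in two separate passes.
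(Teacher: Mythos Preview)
The paper does not actually supply a proof of this lemma; it is stated without argument, in the same spirit as Lemma~\ref{lem::polycyclic_series_group}, which the authors say ``follows readily from the relations.'' So there is nothing in the paper to compare your argument against directly.

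That said, your downward induction is correct and is the natural way to fill in the details. One small simplification: once you have assumed $G_{i+1} \trianglelefteq G$ and are working in the quotient $G/G_{i+1}$, you only need the image $\bar g_i$ to commute with the images $\bar g_k$ for $k < i$ (the images of $g_{i+1},\ldots,g_n$ are trivial). Commutation with $\bar g_k^{-1}$ is then automatic in the quotient group, so the separate bookkeeping for inverse generators via (GR3) and (GR6) that you flag as ``the only real obstacle'' is unnecessary. The single commutator identity $[g_i,g_k] \in G_{i+1}$ for $k<i$, read off directly from the nilpotent form of (GR2), already suffices to close the induction.
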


We define a 
{\em weight function} $w \colon \{g_1, \ldots, g_n\} \ra \N,\, g_i \ms w(g_i)$,
so that each image $w(g_i)$ is minimal subject to the conditions
$w(g_k) \geq w(g_i)$ for each $i,k$ with $e_{i,k} \neq 0$, and $w(g_k) 
\geq w(g_i)+w(g_j)$ for each $i,j,k$ with $a_{i,j,k} \neq 0$ 
or $b_{i,j,k} \neq 0$.
Additionally, we define the weight of a reduced word in the generators 
as the minimum of the weights of the occuring generators. The empty
word gets the weight $\infty$. Let $d$ denote the maximum finite weight that
occurs; that is, $d = \max\{ w(g_i) \mid 1 \leq i \leq n\}$. Clearly,
two words commute if the sum of their weights is larger than $d$.
Hence Theorem \ref{thm::polpre} reduces to the following result.

\begin{theorem} \label{thm::nilpre}
Suppose that $\P_r(e,a,b)$ is a nilpotent presentation and let $w$ be a
weight function for it. Then $\P_r(e,a,b)$ is consistent if and only if
the following equations hold:
\begin{align*}
c(g_j g^{a_{j,k}} g_i) &= c(g_k g_j g_i) 
    & (k > j > i, w(g_k)+w(g_j)+w(g_i) \leq d)
    \\
c(g_j^{r_j} g_i)  &= c(g^{e_j} g_i) 
    & (r_j < \infty, \, j > i, \, w(g_j)+w(g_i) \leq d) 
    \\
c(g_j g_i^{r_i}) &= c(g_j g^{e_i}) 
    & (r_i < \infty, \, j > i, \, w(g_j)+w(g_i) \leq d)
    \\
c(g_j g_i^{-1} g_i) &= c(g_j) 
    & (r_i = \infty, \, j > i, \, w(g_j)+w(g_i) \leq d)
    \\
c(g_i^{r_i+1}) &= c(g_i g^{e_i}) 
    & (r_i < \infty, \, 2w(g_i) \leq d)
\end{align*}
\end{theorem}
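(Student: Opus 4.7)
My plan is to deduce Theorem \ref{thm::nilpre} from Theorem \ref{thm::polpre}. In a nilpotent presentation I will show that every instance of (G1)--(G5) whose weight condition is violated holds automatically, so only the weight-restricted instances listed in Theorem \ref{thm::nilpre} carry genuine information. The forward direction is then immediate: a consistent presentation satisfies every instance of (G1)--(G5) by Theorem \ref{thm::polpre}, and in particular the weight-restricted ones.

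For the backward direction I first set up the weight filtration $N_m = \langle g_i : w(g_i) \geq m\rangle \leq G_r(e,a,b)$. The defining conditions on $w$, combined with the nilpotent shape of the conjugate and power relations --- every ``correction term'' on the right-hand side of a relation has strictly higher weight than the left-hand side --- ensure that each $N_m$ is normal and $[N_a, N_b] \leq N_{a+b}$. Since every generator has weight at most $d$, the subgroup $N_{d+1}$ is trivial, so any two subwords whose weights sum to more than $d$ commute in $G_r(e,a,b)$. Equation (G4) is unconditional: one collection step cancels $g_i^{-1} g_i$ on the left, and both sides reduce to $g_j$. For each of (G1), (G2), (G3), (G5) with weight-sum exceeding $d$, the two sides are equal as elements of $G_r(e,a,b)$ (directly from (GR1)--(GR3)), and every commutator that collection could introduce lies in $N_{d+1} = 1$. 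A descending induction on weight then shows that the two collection paths produce the same reduced form: in the topmost weight layer the relevant generators are already central, and once that layer is settled one descends to the next.

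The main obstacle is converting group-theoretic commutativity into the syntactic equality $c(\mathrm{LHS}) = c(\mathrm{RHS})$. Although $[N_a, N_b] \leq N_{a+b}$ makes group equality transparent, $c$ is defined by a deterministic left-collection procedure rather than by ambient group equality, so one must track how collection interacts with the weight filtration. I expect the technical heart of the proof to be a lemma to the effect that collection respects the weight filtration and that, modulo $N_{m+1}$, the reduced form $c(w)$ is determined by the class of $w$ in $G_r(e,a,b)/N_{m+1}$ --- this is what legitimises the descending induction used above, and reduces the verification of the omitted test equations to a layer-by-layer check in which the relevant generators are central.
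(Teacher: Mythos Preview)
Your strategy matches the paper's: reduce Theorem~\ref{thm::nilpre} to Theorem~\ref{thm::polpre} by showing that every instance of (G1)--(G5) with weight-sum exceeding $d$ holds automatically. The paper compresses the whole argument into the single remark preceding the theorem statement (``Clearly, two words commute if the sum of their weights is larger than $d$'').

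There is, however, a concrete error in your handling of (G4). You claim (G4) is ``unconditional'' because ``one collection step cancels $g_i^{-1}g_i$'', but in $g_j g_i^{-1} g_i$ the leftmost non-reduced subword containing the lowest-index generator is $g_j g_i^{-1}$, not $g_i^{-1}g_i$; collection from the left first applies $g_j g_i^{-1}\to g_i^{-1}g^{b_{i,j}}$. And (G4) can genuinely fail in the nilpotent setting: in $\langle g_1,g_2,g_3\mid g_2g_1=g_1g_2g_3,\ g_2g_1^{-1}=g_1^{-1}g_2,\ g_3\text{ central}\rangle$ (all $r_i=\infty$) one gets $c(g_2g_1^{-1}g_1)=g_2g_3\neq g_2$. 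So (G4) must be treated exactly like the other equations, not dismissed. Regarding your ``main obstacle'', the resolution is simpler than the group-theoretic filtration you propose and is purely syntactic: if $w(g_i)+w(g_j)>d$ then every tail exponent $a_{i,j,k}, b_{i,j,k}$ (for $k>j$) vanishes, since a nonzero one would force $w(g_k)>d$; hence the relation is literally $g_jg_i=g_ig_j$ with empty tail, and the omitted instances of (G2)--(G5) follow by tail-free swaps. For (G1) one only needs the further observation that each tail arising during collection has weight at least the sum of two of $w(g_i),w(g_j),w(g_k)$, so any subsequent interaction with the remaining generator is again tail-free. Invoking $G_r(e,a,b)$ or its quotients $G/N_{m+1}$ is both unnecessary and hazardous here, since the group's structure is precisely what is in doubt before consistency is established.
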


In the case of finite $p$-groups, even more reductions are possible using so-called definitions of generators, which are a certain type of relation \cite{MR760652}. 
Since these definitions do not always exist for general nilpotent groups, we will not include them in our considerations.

\subsection{Cyclic extensions for groups}

The theory of cyclic extensions describes how a group can be constructed from
a normal subgroup and a cyclic factor group. Let $N$ be an arbitrary group, 
$C = \langle x \rangle$ be a cyclic group and $\alpha \colon N \ra N$ 
an automorphism. The following lemma determines group multiplications on the set $G = C \times N$. The
proof consists of direct calculations and is omitted.

\begin{lemma}\label{lem::cycextgrp}
$\mbox{}$
\begin{items}
\item[\rm (a)]
If $C$ is infinite, then $G = \{ (x^z, n) \mid z \in \Z, n \in N\}$ and
$(x^{z_1}, n_1) (x^{z_2}, n_2) = (x^{z_1+z_2}, \alpha^{z_2}(n_1) n_2)$ 
defines a group multiplication on $G$.
\item[\rm (b)]
If $C$ is finite of order $m$, then $G = \{ (x^z, n) \mid 0 \leq z < m,
n \in N\}$. Choose $y \in N$ and define
\[ (x^{z_1}, n_1) (x^{z_2}, n_2) = 
    \left \{ \begin{array}{ll}
     (x^{z_1+z_2}, \alpha^{z_2}(n_1) n_2) & \mbox{ if } z_1+z_2 < m \\
     (x^{z_1+z_2-m}, y \alpha^{z_2}(n_1) n_2) & \mbox{ if } z_1+z_2 \geq m 
    \end{array} \right.\]
Then this defines a group multiplication on $G$ if and only if 
$\alpha(y) = y$ and $\alpha^m$ is the inner automorphism of $N$ 
induced by conjugation with $y$.
\end{items}
\end{lemma}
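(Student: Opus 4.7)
My plan is to verify the group axioms for $G = C \times N$ with the given multiplication. In both parts the identity element is $(x^0, 1_N)$ (immediate from the formula, since $\alpha^0 = \mathrm{id}_N$) and inverses can be solved for directly from the defining equation $(x^z, n)(x^{z'}, n') = (x^0, 1_N)$, so the only substantive axiom is associativity.

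For part (a), expanding both bracketings of a triple product $(x^{z_1}, n_1)(x^{z_2}, n_2)(x^{z_3}, n_3)$ yields
\[
\bigl(x^{z_1+z_2+z_3},\ \alpha^{z_2+z_3}(n_1)\,\alpha^{z_3}(n_2)\,n_3\bigr),
\]
using only that $\alpha^{z_3}$ is a homomorphism and that $\alpha^{z_2}\circ\alpha^{z_3}=\alpha^{z_2+z_3}$. The inverse of $(x^z, n)$ is $(x^{-z}, \alpha^{-z}(n^{-1}))$, and part (a) is complete.

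For part (b), the cleanest approach is to rewrite the two-case formula uniformly as
\[
(x^{z_1},n_1)(x^{z_2},n_2)\ =\ \bigl(x^{(z_1+z_2)\bmod m},\ y^{\nu(z_1+z_2)}\,\alpha^{z_2}(n_1)\,n_2\bigr),
\]
where $\nu(z)=\lfloor z/m\rfloor\in\{0,1\}$ for $0\le z<2m$. Comparing both bracketings of a triple product, the first components agree automatically and the total exponent of $y$ on each side equals $\lfloor(z_1+z_2+z_3)/m\rfloor$. The remaining discrepancies in the second components have two sources: a factor $y$ produced by a left carry gets pushed through an $\alpha^{z_3}$, and a right carry converts $\alpha^{z_2+z_3}(n_1)$ into $\alpha^{z_2+z_3-m}(n_1)$ with a stray $y$. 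The first phenomenon is neutralised by $\alpha(y)=y$ (which forces $\alpha^k(y)=y$ for all $k$), and the second by the identity $\alpha^m(n)=yny^{-1}$, which together with $\alpha(y)=y$ gives $\alpha^{z_2+z_3}(n_1)=y\,\alpha^{z_2+z_3-m}(n_1)\,y^{-1}$. A case analysis based on whether $z_1+z_2\ge m$ and whether $z_2+z_3\ge m$ then verifies associativity in each of the four cases.

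For the converse direction in (b) I would specialise the associativity equation: taking $z_1=0$ together with $z_2,z_3$ such that $z_2+z_3\ge m$, and $n_2=n_3=1_N$ with $n_1$ arbitrary isolates the requirement $\alpha^m(n_1)=yn_1y^{-1}$; taking instead a configuration that produces a left carry with $n_1=n_2=1_N$ but arbitrary placement of $y$ isolates $\alpha(y)=y$. Hence both conditions are necessary. The main obstacle is the carry bookkeeping in part (b): while the underlying algebraic manipulations are elementary, one must track precisely when each factor of $y$ is produced and through which $\alpha^{z_i}$ it is pushed, because each of the two hypotheses on $y$ enters exactly once per case and at a specific point in the computation.
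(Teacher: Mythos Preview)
The paper omits the proof of this lemma entirely, stating only that it ``consists of direct calculations''; your proposal is precisely such a direct verification of the group axioms, so the approach matches.

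One small correction in part~(b): your identity $\alpha^m(n)=yny^{-1}$ has the wrong orientation. If you carry out the case $z_1=0$, $z_2+z_3\ge m$, $n_2=n_3=1$ explicitly, the left bracketing gives second component $y\,\alpha^{z_2+z_3}(n_1)$ while the right bracketing gives $\alpha^{z_2+z_3-m}(n_1)\,y$, forcing $\alpha^m(n)=y^{-1}ny$ rather than $yny^{-1}$. This is consistent with the paper's convention $g_j^{g_i}=g_i^{-1}g_jg_i$ used throughout for conjugation. With this sign fixed, your case analysis and your isolation of the two necessary conditions go through exactly as you describe.
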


\subsection{The proof of Theorem \ref{thm::polpre}}
\label{subsec::groups}
Recall that $\P_r(e,a,b)$ is a presentation on the generators $g_1, \ldots, 
g_n$ whose relations are defined by $e,a,b$. For $1 \leq k \leq n$ we define 
$\P^{[k]}_r(e,a,b)$ as the presentation on the generators $g_k, \ldots, g_n$
whose relations are those relations of $\P_r(e,a,b)$ which are words in these
generators only; that is, the defining relations of $\P^{[k]}_r(e,a,b)$ are 
\begin{eqnarray*}
g_i^{r_i} &=& g_{i+1}^{e_{i,i+1}} \cdots g_n^{e_{i,n}} 
              \mbox{ for } k \leq i \leq n \mbox{ with } r_i < \infty, \\
g_j g_i &=& g_i g_{i+1}^{a_{i,j,i+1}} \cdots g_n^{a_{i,j,n}} 
              \mbox{ for } k \leq i < j \leq n, \mbox{ and } \\
g_j g_i^{-1} &=& g_i^{-1} g_{i+1}^{b_{i,j,i+1}} \cdots g_n^{b_{i,j,n}} 
              \mbox{ for } k \leq i < j \leq n \mbox{ with } r_i = \infty.
\end{eqnarray*}
Let $F_k$ denote the free monoid on $\{g_k, \ldots, g_n, g_k^{-1}, \ldots, g_n^{-1}\}$. We interpret the collection of the left algorithm as a function $c: F_1 \rightarrow F_1$. 
It is necessary for the $F_k$ to be free monoids instead of free groups - if $F_1$ were a free group, the element represented by $g_i^{-1}g_i$ would be equal to the neutral element of the group, 
while replacing $g_i^{-1}g_i$ by the empty word is a non-trivial collection step.

Let $H^{[k]}$ 
be the group defined by $\P_r^{[k]}(e,a,b)$ and recall that $G_k$ is
the subgroup of $G$ generated by $g_k, \ldots, g_n$. Define the 
epimorphisms $\theta_k \colon F_k \ra H^{[k]},\, g_j \ms g_j (k \leq j \leq n)$ 
and $\rho_k \colon H^{[k]} \ra G_k,\, g_j \ms g_j (k \leq j \leq n)$ and
obtain

\begin{align*}
F_k \stackrel{\theta_k}{\longrightarrow}
H^{[k]}\stackrel{\rho_k}{\longrightarrow} G_k. 
\end{align*}

We note that neither of the epimorphisms $\theta_k$ or $\rho_k$ is an
isomorphism in general. For $\theta_k$ this is obvious, for $\rho_k$
we exhibit the following example.

\begin{example}
Let $\P_r(e,a,b) = \langle g_1,g_2 \mid g_2g_1=g_1g_2,\;g_2g_1^{-1}
=g_1^{-1}g_2^2 \rangle$. Then $H^{[2]} = \langle g_2 \mid \emptyset 
\rangle \cong \Z$ and $G_2$ is the trivial group, since $g_2 = 1$ in $G$.
\end{example}

The following lemma is a consequence of consistency. A proof can be found in \cite[p.~421]{Sim94}.

\begin{lemma} \label{lem::grpmono}
The presentation $\P_r(e,a,b)$ is consistent if and only if $\rho_i$ 
is an isomorphism for $1 \leq i \leq n$.
\end{lemma}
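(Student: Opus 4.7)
The plan is to apply Lemma \ref{lem::polycyclic_series_group} twice: once to $G$, giving the series $G_i$ with $s_i \leq r_i$, and once to $H^{[k]}$ itself, which is the group defined by the polycyclic presentation $\P_r^{[k]}(e,a,b)$ on the generators $g_k,\ldots,g_n$ with relative orders $r_k,\ldots,r_n$. The second application yields a polycyclic series $H^{[k]} = \tilde H_k \geq \tilde H_{k+1} \geq \cdots \geq \tilde H_{n+1} = \{1\}$ with $\tilde H_j = \langle g_j,\ldots,g_n\rangle_{H^{[k]}}$ and $[\tilde H_j:\tilde H_{j+1}] \leq r_j$. Since $\rho_k$ sends each $g_j$ to $g_j$, it maps $\tilde H_j$ onto $G_j$ and therefore induces a surjection on quotients $\tilde H_j/\tilde H_{j+1} \twoheadrightarrow G_j/G_{j+1}$ for every $j \geq k$.

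The key calculation is to pin down $H^{[i]}/\tilde H_{i+1}$ exactly. The subgroup $\tilde H_{i+1}$ is normal in $H^{[i]}$ because it sits in the polycyclic series just produced, so the quotient is obtained from $\P_r^{[i]}(e,a,b)$ by adjoining the relations $g_{i+1}=\cdots=g_n=1$. Substituting these into the defining relations, every conjugate relation becomes trivial and every power relation $g_j^{r_j}=g^{e_j}$ for $j>i$ reduces to $1=1$, precisely because the right-hand sides of \eqref{GR1}, (GR2), (GR3) all lie in $\langle g_{i+1},\ldots,g_n\rangle$. Only $g_i^{r_i}=1$ survives (or no relation at all, if $r_i=\infty$), so $H^{[i]}/\tilde H_{i+1}$ is cyclic of order exactly $r_i$.

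For the forward direction, assume consistency, i.e.\ $s_j=r_j$ for all $j$. For each $j \geq k$ the induced surjection on quotients has source of order at most $r_j$ and target of order exactly $r_j=s_j$, so it is an isomorphism (in the finite case by counting; for $r_j=\infty$, a cyclic group surjecting onto an infinite cyclic group must itself be infinite cyclic, and any surjective endomorphism of $\Z$ is bijective). A downward induction on $j$ from $n+1$ to $k$ along the series $\tilde H_j$, starting from the trivial base $\tilde H_{n+1}=\{1\}$, then lifts these quotient isomorphisms to injectivity of $\rho_k$ on each $\tilde H_j$, so $\rho_k$ is an isomorphism. Conversely, if each $\rho_i$ is an isomorphism, then $\rho_i$ restricts to an isomorphism $\tilde H_{i+1}\to G_{i+1}$, and the induced map $H^{[i]}/\tilde H_{i+1}\to G_i/G_{i+1}$ is therefore an isomorphism. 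Combined with the order-exactly-$r_i$ computation, this forces $s_i=r_i$ for every $i$.

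The main obstacle is the middle step: normality of $\tilde H_{i+1}$ in $H^{[i]}$ is handed to us by Lemma \ref{lem::polycyclic_series_group}, but it is crucial that collapsing the higher generators in the defining relations does not secretly impose an extra relation on $g_i$ that would make $H^{[i]}/\tilde H_{i+1}$ smaller than $r_i$. This is exactly the syntactic feature of a polycyclic presentation that makes the whole framework work, namely that the right-hand sides of every power and conjugate relation involving $g_i$ lie entirely in the tail subgroup $\langle g_{i+1},\ldots,g_n\rangle$.
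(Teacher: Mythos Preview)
Your argument is correct. The paper does not actually prove this lemma; it only cites \cite[p.~421]{Sim94}. Your proof is self-contained and stays within the paper's own framework: two applications of Lemma~\ref{lem::polycyclic_series_group} (once to $G$ and once to each $H^{[k]}$), the presentation-theoretic identification of $H^{[i]}/\tilde H_{i+1}$ as cyclic of order exactly $r_i$, and a downward induction along the series $\tilde H_j$ using the induced surjections on successive quotients. The one place worth tightening is the phrasing in your final paragraph: for the conjugate relations (GR2) and (GR3) with lower index $i$, the right-hand side is $g_i^{\pm 1}$ times a tail word, not a tail word alone; but after killing $g_{i+1},\ldots,g_n$ these relations still collapse to $g_i^{\pm 1}=g_i^{\pm 1}$, so your conclusion that no extra relation on $g_i$ survives is unaffected.
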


The presentation $\mathcal P^{[n]}_r(e,a,b)$ is always consistent, since 
it has only one generator $g_n$ and at most one relation. If the implication 
\[ \P^{[i+1]}_r(e,a,b) \mbox{ is consistent } 
   \Rightarrow  
   \P^{[i]}_r(e,a,b) \mbox{ is consistent } \]
follows from Equations \eqref{eq::test1}-\eqref{eq::test5} of Theorem 
\ref{thm::polpre}, then $\P_r(e,a,b)$ is consistent by induction. It is sufficient to show this implication in the 
case $i=1$ only.

\begin{lemma} 
Assume that $\P^{[2]}_r(e,a,b)$ is consistent. Then the map
\[\tau: F_2 \rightarrow F_2, g_j \mapsto g^{a_{1,j}}, g_j^{-1} \mapsto g^{c_{1,j}} (2 \leq j \leq n)\] induces an endomorphism of 
$H^{[2]}$ if and only if Equations \eqref{eq::test1} and \eqref{eq::test2} 
of Theorem \ref{thm::polpre} with $i=1$ are valid. 
\end{lemma}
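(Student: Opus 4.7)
The plan is to exploit a transfer identity relating the collection function $c\colon F_1\to F_1$ to its analogue $c_{F_2}\colon F_2\to F_2$ for the sub-presentation $\mathcal{P}^{[2]}_r(e,a,b)$. Specifically, I claim that for every $w\in F_2$,
\[
c(w g_1) \;=\; g_1\cdot c_{F_2}(\tau(w)).
\]
The reason is that the leftmost-lowest-index priority of collection-to-the-left migrates the unique occurrence of $g_1$ leftward past every generator of $w$ before any $\mathcal{P}^{[2]}$-internal reduction takes place; each step $g_jg_1\to g_1g^{a_{1,j}}$ (from (GR2)) or $g_j^{-1}g_1\to g_1g^{c_{1,j}}$ (from (GR4)) substitutes exactly $\tau(g_j^{\pm1})$ to the right of $g_1$. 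In the case $r_j<\infty$ the algorithm first invokes \eqref{GR6} to eliminate $g_j^{-1}$, but the outcome still matches $\tau$ after reduction in $H^{[2]}$.

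Because $\mathcal{P}^{[2]}_r(e,a,b)$ is assumed consistent, $c_{F_2}$ produces unique normal forms, hence $c_{F_2}(u)=c_{F_2}(v)$ if and only if $\theta_2(u)=\theta_2(v)$ in $H^{[2]}$. Combined with the transfer identity, every test equation of the form $c(u g_1)=c(v g_1)$ with $u,v\in F_2$ is equivalent to $\theta_2(\tau(u))=\theta_2(\tau(v))$. Applying this to \eqref{eq::test1} with $i=1$ (over all $k>j>1$) yields $\tau(g_j)\tau(g^{a_{j,k}})=\tau(g_k)\tau(g_j)$ in $H^{[2]}$, which is exactly the statement that the images $\tau(g_j)$ satisfy the conjugate relations (GR2) of $\mathcal{P}^{[2]}_r(e,a,b)$. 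Similarly, \eqref{eq::test2} with $i=1$ translates into $\tau(g_j)^{r_j}=\tau(g^{e_j})$ in $H^{[2]}$, that is, preservation of \eqref{GR1}.

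Both directions then follow. For the forward direction, if $\tau$ induces an endomorphism of $H^{[2]}$ it must respect every defining relation, so both test equations hold after the translation above. For the converse, the test equations guarantee that the images $\theta_2(\tau(g_j))$ satisfy the relations \eqref{GR1} and (GR2) in $H^{[2]}$. Since $\mathcal{P}^{[2]}_r(e,a,b)$ is consistent, in $H^{[2]}$ the word $g^{b_{i,j}}$ coincides with the conjugate $g_ig_jg_i^{-1}$ and $g^{c_{1,j}}$ coincides with the inverse $(g^{a_{1,j}})^{-1}$; hence (GR3) and the monoid-to-group identities $g_jg_j^{-1}=1$ become formal consequences of (GR1)- and (GR2)-preservation together with the group axioms of $H^{[2]}$, and $\tau$ descends to an endomorphism.

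The principal obstacle I anticipate is the careful verification of the transfer identity, especially in the case $r_j<\infty$ where collection-to-the-left detours through \eqref{GR6}, and one has to ensure the priority rules truly process every $g_1$-pair before touching the interior of $\tau(w)$. A secondary delicate point is the reverse direction, where the passage from (GR1)- and (GR2)-preservation to preservation of the remaining relations relies crucially on the consistency hypothesis to identify the structural constants $b_{i,j,k}$ and $c_{1,j,k}$ with the actual conjugate and inverse normal forms in $H^{[2]}$.
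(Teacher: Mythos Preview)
Your proposal is correct and follows essentially the same approach as the paper. The ``transfer identity'' $c(wg_1)=g_1\,c_{F_2}(\tau(w))$ that you isolate is precisely what the paper invokes implicitly in its chains of equalities of the form ``$c(wg_1)=c(g_1\tau(w))$ by definition of $\tau$ and collection'' followed by ``$=g_1\,c(\tau(w))$ via collection''; both directions of the equivalence are then obtained exactly as you outline. Your treatment of the converse is also parallel: the monoid identities $g_jg_j^{-1}=1$ are disposed of via the defining property of $g^{c_{1,j}}$, and the (GR3) relations are argued to be consequences of (GR1) and (GR2) under the consistency hypothesis. The paper makes this last step more explicit by appealing to the cyclic-extension structure of $H^{[j]}$ over $H^{[j+1]}$ to identify $g^{b_{j,k}}$ with $\alpha^{-1}(g_k)$, where $\alpha$ is conjugation by $g_j$; your phrase ``formal consequences together with the group axioms'' points at the same idea but would benefit from that level of detail. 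One reassuring remark: the obstacle you flag concerning (GR6) and $r_j<\infty$ does not actually arise for the specific words $g_jg^{a_{j,k}}$, $g_kg_j$, $g_j^{r_j}$, $g^{e_j}$ appearing in the test equations, since none of them contain a symbol $g_\ell^{-1}$.
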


\begin{proof}
``$\Rightarrow$'':
Assume that $\tau$ induces an endomorphism 
$\sigma$ of $H^{[2]}$. We show that all Equations 
\eqref{eq::test1} and \eqref{eq::test2} of Theorem \ref{thm::polpre} with 
$i=1$ are valid. 

In $H^{[2]}$ the relation $g_jg^{a_{j,k}} = g_kg_j$ holds for $2 \leq j < k
\leq n$. Since $\sigma$ is an endomorphism, it follows that 
$\sigma(g_jg^{a_{j,k}}) = \sigma(g_kg_j)$ holds in $H^{[2]}$. 
As $\P^{[2]}_r(e,a,b)$ is consistent, we obtain that the equations
\begin{align} \label{eq::proof_thm_grps_test_1}
c(\tau(g_jg^{a_{j,k}})) = c(\tau(g_kg_j)) \;\;\; (2 \leq j < k \leq n)
\end{align}
are valid in $F_2$. This yields in $F_1$:
\begin{eqnarray*}
&& c(g_j g^{a_{j,k}}g_1) \\	
&&=  c(g_1 \tau(g_jg^{a_{j,k}})) 
     \;\; \mbox{ by def of $\tau$ and collection}\\			
&&=  g_1 c(\tau(g_jg^{a_{j,k}}))
     \;\; \mbox{ via collection} \\
&&=  g_1c(\tau(g_kg_j))
     \;\; \mbox{ by Equation \eqref{eq::proof_thm_grps_test_1}} \\
&&= c(g_1\tau(g_kg_j))
     \;\; \mbox{ via collection} \\
&&= c(g_kg_jg_1)
      \;\; \mbox{ by def of $\tau$ and collection}.
\end{eqnarray*}

Thus Equations \eqref{eq::test1} of Theorem \ref{thm::polpre} are 
valid for $i = 1$. Similarly, since $\sigma(g_j^{r_j})=\sigma(g_j^{e_j})$ is a relation of $H^{[2]}$, it follows that the equations
\begin{align} 
c(\tau(g_j^{r_j})) = c(\tau(g_j^{e_j})) \;\;\; (2 \leq j <  n) \label{eq::proof_thm_grps_test_2}
\end{align}
are valid in $F_2$. This implies
\begin{align*}
 c(g_j^{r_j}g_1) 	&= c(g_1\tau(g_j^{r_j})) \;\; \mbox{ by def of $\tau$ and collection}\\
			&= g_1 c(\tau(g_j^{r_j}))\;\; \mbox{ via collection}  \\
			&= g_1 c(\tau(g_j^{e_j}))\;\; \mbox{ via Equation \eqref{eq::proof_thm_grps_test_2}} \\
			&= c(g_1 \tau(g_j^{e_j}))\;\; \mbox{ via collection} \\
			&= c(g_j^{e_j}g_1) \mbox{ by def of $\tau$ and collection.}
\end{align*}
Hence Equations \eqref{eq::test2} of Theorem \ref{thm::polpre} are 
valid as well for $i=1$. 
\medskip

``$\Leftarrow$'':
Assume that all Equations \eqref{eq::test1} and \eqref{eq::test2} of 
Theorem \ref{thm::polpre} are valid. We show that $\tau$ 
induces an endomorphism of $H^{[2]}$. 
$\tau$ induces an endomorphism of $H^{[2]}$ if all of the defining 
relations of $H^{[2]}$ are mapped to relations under $\tau$. There are three 
types of group relations: they have left hand sides $g_j^{r_j}$, $g_kg_j$ and 
$g_kg_j^{-1}$ for $2 \leq j < k \leq n$.

Additionally, since $\tau$ is defined to be the free monoid on $\{g_2,...,g_n,g_2^{-1},...,g_n^{-1}\}$, we need to check whether
$\tau(g_jg_j^{-1})$ and $ \tau(g_j^{-1}g_j)$ are mapped to relators of $H^{[2]}$. We have
\begin{align*}
c(\tau(g_jg_j^{-1})) &= c(g^{a_{1,j}}g^{c_{1,j}}) \text{ and}\\
c(\tau(g_j^{-1}g_j)) &= c(g^{c_{1,j}}g^{a_{1,j}}) 
\end{align*}
As described in the introduction, the exponents $c_{1,j,k}$ are calculated so that $g^{c_{1,j}}$ is inverse to $g^{a_{1,j}}$.
Since $\P^{[2]}_r(e,a,b)$ is consistent, there is only one reduced form of $(g^{a_{1,j}})^{-1}$, so the numbers $c_{1,j,k}$ are unique and the equations
\begin{align*}
c(g^{a_{1,j}}g^{c_{1,j}}) &= 1\\
c(g^{c_{1,j}}g^{a_{1,j}}) &= 1
\end{align*}
hold in $F_2$. This shows that $\tau(g_jg_j^{-1})$ and $\tau(g_j^{-1}g_j)$ are relators of $H^{[2]}$.

Consider the relation $g_kg_j = g_jg^{a_{j,k}}$ of $H^{[2]}$: 
\begin{eqnarray*}
&& g_1 c(\tau(g_jg^{a_{j,k}})) \\
 &&= c(g_1\tau(g_jg^{a_{j,k}}))
   \;\; \mbox{ by collection} \\
 &&= c(g_jg^{a_{j,k}}g_1)
   \;\; \mbox{ via def of $\tau$ and collection} \\
 &&= c(g_kg_jg_1)
   \;\; \mbox{ by Equation \eqref{eq::test1} with $i=1$} \\
 &&= c(g_1\tau(g_kg_j))
   \;\; \mbox{ via def of $\tau$ and collection} \\
 &&= g_1c(\tau(g_kg_j))
   \;\; \mbox{ by collection}
\end{eqnarray*}
Due to the fact that every collection step (on these words) corresponds 
to a relation of $H^{[2]}$, this shows that $\tau(g_jg^{a_{j,k}}) 
= \tau(g_kg_j)$ is a relation of $H^{[2]}$.

Next, consider the relation $g_j^{r_j}=g^{e_j}$ for $j \geq 2$: 
\begin{align*}
g_1c(\tau(g_j^{r_j})) &= c(g_1\tau(g_j^{r_j})) \;\; \mbox{ by collection} \\
		&= c(g_j^{r_j}g_1) \;\; \mbox{ via def of $\tau$ and collection}\\
		&= c(g^{e_j}g_1)\;\; \mbox{ by Equation \eqref{eq::test2} with $i=1$}\\
		&= c(g_1\tau(g^{e_j}))\;\; \mbox{ via def of $\tau$ and collection}\\
		&= g_1c(\tau(g^{e_j}))\;\; \mbox{ by collection.} 
\end{align*}
It follows that $\tau(g_j^{r_j}) = \tau(g^{e_j})$ is valid in $H^{[2]}$.

It remains to show that relations of the form $g_kg_j^{-1} 
= g_j^{-1}g^{b_{j,k}}$ with $ n\geq k > j > 1$ and $r_j = \infty$ are 
compatible with $\tau$. We prove that this relation is a consequence of 
the last two kinds of relations of $H^{[2]}$ considered previously. As 
$\P^{[2]}_r(e,a,b)$ is consistent, it follows that $\P^{[j]}_r(e,a,b)$
is consistent for $2 \leq j \leq n$. As $r_j=\infty$, the group 
$H^{[j]}$ is an infinite cyclic extension of $H^{[j+1]}$ via 
$\langle g_j \rangle$. Thus conjugation with $g_j$ induces an 
automorphism $\alpha \colon H^{[j+1]} \rightarrow H^{[j+1]}$ with $g_k=\alpha(x)$ for a unique $x \in H^{[j+1]}$. The relation 
$g_k^{g_j^{-1}} = g^{b_{j,k}}$ now follows from

\begin{align*}
g_k &= \alpha\left(\alpha^{-1}(g_k)\right) \\
 &= \alpha\left(g_k^{g_j^{-1}}\right)\\
 &= \alpha\left( x \right),\\
\end{align*}
Writing $x$ as a word in reduced form yields 
$g^{b_{j,k}}$, so we can calculate $g^{b_{j,k}}$ without using the 
relation $g_kg_j^{-1} = g_j^{-1}g^{b_{j,k}}$.

\end{proof}

\begin{lemma}
Assume that the presentation $\mathcal P^{[2]}_r(e,a,b)$ is consistent, $r_1 < \infty$ and the 
map $g_j \mapsto g^{a_{1,j}} (2 \leq j \leq n)$ induces an endomorphism $\sigma$ of $H^{[2]}$. 
\begin{items}
 \item[\rm (a)] If all Equations \eqref{eq::test3} of Theorem \ref{thm::polpre} are valid for $i=1$ and all $j > 1$, then $\sigma^{r_1}$ equals the inner 
 automorphism of $H^{[2]}$ induced by conjugation with $g^{e_1}$. In this case, $\sigma$ is an automorphism. 
 \item[\rm (b)] If, additionally, Equation \eqref{eq::test5} of Theorem \ref{thm::polpre} is valid for $i=1$, then $\sigma$ fixes $g^{e_1}$. 
\end{items}

\end{lemma}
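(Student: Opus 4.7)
My plan is to derive both parts from the test equations by reading them as identities in $H^{[2]}$ through the consistency of $\P^{[2]}_r(e,a,b)$.

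For (a), since both $\sigma^{r_1}$ and the inner automorphism $\iota$ of $H^{[2]}$ induced by conjugation with $g^{e_1}$ are endomorphisms of $H^{[2]}$, it suffices to show $\sigma^{r_1}(g_j) = \iota(g_j)$ for each generator $g_j$ with $j \geq 2$. The key step is a careful analysis of left-collection applied to $g_j g_1^{r_1}$. Repeated applications of $g_k g_1 = g_1 g^{a_{1,k}}$ move every $g_1$ to the front; at the word level, each such push replaces the letter $g_k$ by the word $g^{a_{1,k}}$ that defines $\sigma(g_k)$. After $r_1$ passes, the word takes the form $g_1^{r_1} u$, where $u \in F_2$ represents $\sigma^{r_1}(g_j)$ in $H^{[2]}$; the rule $g_1^{r_1} \to g^{e_1}$ then produces a word lying entirely in $F_2$, and the remaining collection is carried out inside $F_2$, yielding by consistency of $\P^{[2]}_r(e,a,b)$ the unique normal form of $g^{e_1}\sigma^{r_1}(g_j) \in H^{[2]}$. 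On the other side, $g_j g^{e_1}$ already lies in $F_2$, so $c(g_j g^{e_1})$ is the normal form of $g_j g^{e_1} \in H^{[2]}$. Equation \eqref{eq::test3} with $i=1$ identifies these two normal forms, and uniqueness forces $g^{e_1} \sigma^{r_1}(g_j) = g_j g^{e_1}$, i.e., $\sigma^{r_1}(g_j) = (g^{e_1})^{-1} g_j g^{e_1} = \iota(g_j)$. Thus $\sigma^{r_1} = \iota$; since $\iota$ is a bijection, $\sigma^{r_1}$ is a bijection, and therefore $\sigma$ itself is an automorphism of $H^{[2]}$.

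For (b), I run the same style of analysis on \eqref{eq::test5} with $i=1$. Left-collection of $g_1^{r_1+1}$ first replaces the leading $g_1^{r_1}$ by $g^{e_1}$, producing $g^{e_1} g_1$; the trailing $g_1$ is then pushed past $g^{e_1}$ letter by letter, yielding $g_1 v$ for a word $v \in F_2$ representing $\sigma(g^{e_1})$ in $H^{[2]}$, and the result is $g_1 \cdot c(v)$. In contrast, $c(g_1 g^{e_1}) = g_1 \cdot c(g^{e_1})$ since the leading $g_1$ never moves. Invoking \eqref{eq::test5} and consistency of $\P^{[2]}_r(e,a,b)$ inside $F_2$ then yields $\sigma(g^{e_1}) = g^{e_1}$ in $H^{[2]}$.

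The main technical obstacle is justifying the specific scheduling of left-collection used in both calculations: that on $g_j g_1^{r_1}$ and on $g_1^{r_1+1}$ the ``leftmost occurrence of a non-reduced subword with the lowest-index generator'' rule really selects the rewrites I am using, in the order given. I would carry this out by induction on the number of $g_1$-letters still to the right of the already-normalized prefix, noting that at each intermediate configuration the leftmost non-reduced subword involving the lowest index $1$ is precisely the $g_k g_1$ pair at the junction between the trailing block and the remaining $g_1$'s, so that all in-$F_2$ collection is deferred until every $g_1$ has been pushed through. Once this scheduling claim is granted, both parts reduce to the straightforward translation between words in $F_2$ and elements of $H^{[2]}$ permitted by consistency of $\P^{[2]}_r(e,a,b)$.
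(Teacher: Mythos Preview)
Your proposal is correct and follows essentially the same route as the paper's proof: both parts are obtained by tracing left-collection on $g_j g_1^{r_1}$ (for (a)) and on $g_1^{r_1+1}$ (for (b)), then using consistency of $\P^{[2]}_r(e,a,b)$ to read off the resulting identities in $H^{[2]}$. The paper packages this via the map $\tau\colon F_2\to F_2$, $g_j\mapsto g^{a_{1,j}}$, and writes the chain $c(g_j g^{e_1})=c(g_j g_1^{r_1})=c(g_1^{r_1}\tau^{r_1}(g_j))=c(g^{e_1}\tau^{r_1}(g_j))$, exactly your computation in different notation; similarly for (b). One small difference: to conclude that $\sigma$ is an automorphism you argue directly that bijectivity of $\sigma^{r_1}$ forces bijectivity of $\sigma$, whereas the paper only extracts surjectivity from $\sigma^{r_1}$ and then invokes the Hopfian property of polycyclic groups for injectivity---your argument is more elementary here. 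Your explicit acknowledgment of the collection-scheduling issue is also more careful than the paper, which simply asserts the relevant equalities ``via def of $\tau$ and collection.''
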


\begin{proof}
Let $\tau: F_2 \rightarrow F_2, g_j \mapsto g^{a_{1,j}}$. $\tau$ induces the endomorphism $\sigma$ of $ H^{[2]}$.
\begin{items}
\item[\rm (a)] Applying Equation \eqref{eq::test3} of Theorem \ref{thm::polpre} with $i=1$, the definitions of $\tau$ and the collection algorithm yields in $F_1$:
\begin{eqnarray*}
&& c(g_jg^{e_1}) \\
 &&= c(g_jg_1^{r_1})
   \;\; \mbox{ by Equation \eqref{eq::test3} with $i=1$} \\
 &&= c(g_1^{r_1}\tau^{r_1}(g_j))
   \;\; \mbox{ via def of $\tau$ and collection} \\
 &&= c(g^{e_1}\tau^{r_1}(g_j))
   \;\; \mbox{ by collection}
\end{eqnarray*}
Since all collection steps on the words $g_jg^{e_1}$ and $g^{e_1}\tau^{r_1}(g_j)$ are derived from relations of $H^{[2]}$, 
it follows that $g_jg^{e_1} = g^{e_1}\sigma^{r_1}(g_j)$ is a valid equation in $H^{[2]}$. Thus, $\sigma^{r_1}$ is the inner automorphism induced by $g^{e_1}$. 
Since $\sigma^{r_1}$ is an epimorphism, it follows that $\sigma$ is an epimorphism. Since $H^{[2]}$ is polycyclic and therefore Hopfian, $\sigma$ is also an automorphism.

\item[\rm (b)] Using similar arguments as above, Equation \eqref{eq::test5} with $i=1$ implies via
\begin{eqnarray*}
&& g_1g^{e_1}\\
&&=c(g_1g^{e_1})  
\;\; \mbox{ by collection} \\
 &&=c(g_1^{r_1+1})
   \;\; \mbox{ by Equation \eqref{eq::test5} with $i=1$} \\
&&= c(g^{e_1}g_1)
   \;\; \mbox{ by collection} \\
&&= c(g_1 \tau(g^{e_1}))
   \;\; \mbox{ via def of $\tau$ and collection} \\
&&= g_1c( \tau(g^{e_1}))
   \;\; \mbox{ by collection}
\end{eqnarray*}
that $g^{e_1} = c(\tau(g^{e_1}))$ holds in $F_2$. Therefore, $\sigma$ fixes $g^{e_1}$ in $H^{[2]}$, since all collection steps on $\tau(g^{e_1})$ correspond to relations of $H^{[2]}$.
\end{items}
\end{proof}

\begin{lemma}
Assume that the presentation $\mathcal P^{[2]}_r(e,a,b)$ is consistent, $r_1 = \infty$ and the 
map $g_j \mapsto g^{a_{1,j}} (2 \leq j \leq n)$ induces an endomorphism $\sigma$ of $H^{[2]}$. If all Equations \eqref{eq::test4} of Theorem \ref{thm::polpre} with $i=1$ are valid, 
then the map $g_j \mapsto g^{b_{1,j}} (2 \leq j \leq n)$ induces the automorphism $\sigma^{-1}$ on $H^{[2]}$.
\end{lemma}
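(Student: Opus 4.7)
The plan is to extract from Equation \eqref{eq::test4} (with $i=1$) the identity $\sigma(g^{b_{1,j}}) = g_j$ in $H^{[2]}$ for every $j \geq 2$, and then conclude that $\sigma$ is surjective, hence (being an endomorphism of the polycyclic, and therefore Hopfian, group $H^{[2]}$) an automorphism whose inverse necessarily satisfies $\sigma^{-1}(g_j) = g^{b_{1,j}}$.

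To obtain the key identity I would trace the collection-to-the-left algorithm on $c(g_j g_1^{-1} g_1)$. The leftmost subword involving the lowest-index generator that admits a rewrite is $g_j g_1^{-1}$, and relation (GR3) replaces it by $g_1^{-1} g^{b_{1,j}}$, producing $g_1^{-1} g^{b_{1,j}} g_1$. At this point the cancellation $g_1^{-1} g_1$ is not yet available since the two $g_1$-factors are separated by $g^{b_{1,j}}$; instead, collection must push the trailing $g_1$ leftward through $g^{b_{1,j}}$ via the (GR2)-relations $g_k g_1 \to g_1 g^{a_{1,k}}$. Each such step substitutes the $\tau$-image of the corresponding generator, where $\tau\colon F_2 \to F_2,\, g_k \mapsto g^{a_{1,k}}$ is the map from the preceding lemma inducing $\sigma$. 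After the pushes, the word becomes $g_1^{-1} g_1 \cdot \tau(g^{b_{1,j}})$; cancellation of $g_1^{-1} g_1$ followed by further collection in $F_2$ yields $c(g_j g_1^{-1} g_1) = c(\tau(g^{b_{1,j}}))$. Equation \eqref{eq::test4} now asserts this equals $c(g_j) = g_j$, so $c(\tau(g^{b_{1,j}})) = g_j$ in $F_2$. Since $\P^{[2]}_r(e,a,b)$ is consistent, distinct reduced forms represent distinct elements of $H^{[2]}$, so passing to $H^{[2]}$ gives $\sigma(g^{b_{1,j}}) = g_j$.

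The identities $\sigma(g^{b_{1,j}}) = g_j$ for $2 \leq j \leq n$ show that every generator of $H^{[2]}$ lies in the image of $\sigma$, so $\sigma$ is surjective. By Lemma \ref{lem::polycyclic_series_group} applied to $\P^{[2]}_r(e,a,b)$, $H^{[2]}$ is polycyclic and hence Hopfian, so $\sigma$ is an automorphism. Rearranging yields $\sigma^{-1}(g_j) = g^{b_{1,j}}$, so the map $g_j \mapsto g^{b_{1,j}}$ induces $\sigma^{-1}$ on $H^{[2]}$, as required. The main obstacle is rigorously justifying the order of collection steps sketched above, and in particular that the leftmost-lowest-index priority indeed invokes the (GR2)-pushes before the $g_1^{-1} g_1$ cancellation becomes available; this is entirely analogous to the collection manipulations used in the proof of the preceding lemma, and should pose no genuine difficulty.
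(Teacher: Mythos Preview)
Your proposal is correct and follows essentially the same route as the paper's proof: both trace the collection of $g_j g_1^{-1} g_1$ through the intermediate forms $g_1^{-1} g^{b_{1,j}} g_1$ and $g_1^{-1} g_1\, \tau(g^{b_{1,j}})$ to obtain $c(\tau(g^{b_{1,j}})) = g_j$, then invoke surjectivity and the Hopfian property of the polycyclic group $H^{[2]}$ to conclude that $\sigma$ is an automorphism with $\sigma^{-1}(g_j) = g^{b_{1,j}}$. The paper merely packages $g_j \mapsto g^{b_{1,j}}$ as a map $\xi$ on $F_2$, but the argument is otherwise identical.
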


\begin{proof}
Let $\tau: F_2 \rightarrow F_2, g_j \mapsto g^{a_{1,j}}$ and $\xi: F_2 \rightarrow F_2, g_j \mapsto g^{b_{1,j}}$. 
Applying Equation \eqref{eq::test4} of Theorem \ref{thm::polpre} with $i=1$, the definitions of $\tau, \xi$ and the collection algorithm yields:
\begin{eqnarray*}
&& c(g_j)\\
&&=c(g_jg_1^{-1}g_1)  
\;\; \mbox{ by Equation \eqref{eq::test4} with $i=1$} \\
 &&=c(g_1^{-1}\xi(g_j)g_1)
   \;\; \mbox{ via def of $\xi$ and collection} \\
&&= c(g_1^{-1}g_1\tau(\xi(g_j)))
   \;\; \mbox{ via def of $\tau$ and collection} \\
&&= c(\tau(\xi(g_j)))
   \;\; \mbox{ by collection} 
\end{eqnarray*}
This shows that $\sigma$ (induced by $\tau$) is an epimorphism and thus an automorphism, since $H^{[2]}$ is Hopfian. 
The equation $c(g_j) = c(\tau(\xi(g_j)))$ then implies that $\xi$ induces the automorphism $\sigma^{-1}$ on $H^{[2]}$, 
because the collection algorithm only applies substitutions corresponding to relations of $H^{[2]}$ on the word $\tau(\xi(g_j))$.
\end{proof}

We are now in the position to complete the proof of Theorem \ref{thm::polpre}.\\

\begin{proof}
If the given presentation is consistent, all Equations \eqref{eq::test1} - \eqref{eq::test5} of Theorem \ref{thm::polpre} must hold, since
the equations contain words that represent the same elements in the group $G_1 =H^{[1]}$. 

Now assume that all Equations \eqref{eq::test1} - \eqref{eq::test5} of Theorem \ref{thm::polpre} are valid. The presentation $\mathcal P^{[n]}_r(e,a,b)$ is consistent. 
Assume by induction that the presentation $\mathcal P^{[i+1]}_r(e,a,b)$ is consistent. 
After renaming of indices, Equations \eqref{eq::test1} - \eqref{eq::test5} of Theorem \ref{thm::polpre} show that the group $H^{[i]}$ is an extension of $H^{{[i+1]}}$ via the preceding lemmas and Lemma \ref{lem::cycextgrp}. 
The presentation $\mathcal P^{[i]}_r(e,a,b)$ is then consistent by Lemma \ref{lem::grpmono}. It follows by induction on $i$ that the presentation $\mathcal P_r(e,a,b) = \mathcal P^{[1]}_r(e,a,b) $ is consistent.
\end{proof}

\subsection{Examples and irredundancy}
In this subsection we give, for each test equation, an example where that and only that sort of test equations fail. Details about the calculations can be found in Appendix \ref{appendixA}.

\begin{example} \label{ex::counter_group_1}
For the presentation 
\begin{align*}
\langle g_1,g_2,g_3 \mid &g_3g_1=g_1g_2,g_3g_2=g_2g_3^{-1},g_2g_1=g_1g_3 \\
&g_3{g_1^{-1}}={g_1^{-1}}g_2,g_3{g_2^{-1}}={g_2^{-1}}g_3^{-1},g_2{g_1^{-1}}={g_1^{-1}}g_3\rangle
\end{align*}
only Equation \eqref{eq::test1} fails.
\end{example}

\begin{example}
	For the presentation 
	\begin{align*}
	\langle g_1,g_2,g_3\mid &g_3^2=1, g_3g_2=g_2g_3, g_3{g_2^{-1}}={g_2^{-1}}g_3,\\
	&g_3{g_1}={g_1}g_2,g_3{g_1^{-1}}={g_1^{-1}}g_2,g_2{g_1}={g_1}g_3, g_2{g_1^{-1}}={g_1^{-1}}g_3 \rangle
	\end{align*} 
	only Equation \eqref{eq::test2} fails.
\end{example}

\begin{example}
	For the presentation
	\begin{align*}
	\langle g_1,g_2 \mid & g_2{g_1}={g_1}g_2^{-1}, g_1^3=1 \rangle
	\end{align*} 
	only Equation \eqref{eq::test3} fails. 
\end{example}

\begin{example}\label{exa::groups::eq_04_fails}
	For the presentation
	\begin{align*}
	\langle g_1,g_2 \mid & g_2{g_1}={g_1}g_2^2, g_2{g_1^{-1}}={g_1^{-1}}g_2 \rangle
	\end{align*} 
	only Equation \eqref{eq::test4} fails. 
\end{example}

\begin{example} \label{ex::counter_group_5}
	For the presentation
	\begin{align*}
	\langle g_1,g_2 \mid & g_2{g_1}={g_1}g_2^{-1},g_1^2=g_2 \rangle
	\end{align*} 
	only Equation \eqref{eq::test5} fails. 
\end{example}

Together, these examples prove the following proposition:

\begin{proposition}
The Equations \eqref{eq::test1} - \eqref{eq::test5} given in Theorem \ref{thm::polpre} are irredundant, that is, no proper subset of them is sufficient to detect inconsistency without additional assumptions on the presentation.
\end{proposition}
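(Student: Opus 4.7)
The plan is to derive the proposition as an immediate consequence of Examples~\ref{ex::counter_group_1} through~\ref{ex::counter_group_5} via a one-line contrapositive argument. Suppose, for contradiction, that some proper subset $S$ of the test equations \eqref{eq::test1}--\eqref{eq::test5} already suffices to guarantee consistency, in the sense that whenever every equation in $S$ holds for a polycyclic presentation, that presentation must be consistent. Pick any index $i \in \{1,\ldots,5\}$ for which the $i$-th test equation does not lie in $S$. The example corresponding to that value of $i$ supplies an inconsistent polycyclic presentation on which only the $i$-th test equation fails; in particular every equation of $S$ holds on it, so the assumed sufficiency of $S$ forces consistency, contradicting the inconsistency asserted in the example.

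Hence the substantive work lies entirely in verifying that each of the five examples has the properties claimed: that the presentation is genuinely inconsistent, and that among the five test equations exactly the indicated one fails. For the second point I would run the collection-to-the-left algorithm $c$ on the left- and right-hand sides of each of the finitely many instances of \eqref{eq::test1}--\eqref{eq::test5} applicable to the presentation, and compare the resulting reduced words; for the first point I would either exhibit two distinct reduced forms representing the same group element or observe that the group $G_r(e,a,b)$ has strictly smaller order than the product of the $r_i$. These are routine but tedious collection calculations, and the authors defer them to Appendix~\ref{appendixA}.

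The main obstacle is therefore bookkeeping rather than structure: all the real content sits in the construction of the five examples themselves, which had to be chosen carefully so that exactly one test equation fails in each. Once those witnesses are on the table, the proposition follows by pure logic, and no structural use of Theorem~\ref{thm::polpre} or of the cyclic-extension machinery is needed.
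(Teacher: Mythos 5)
Your proposal is correct and is essentially the paper's own argument: the paper proves the proposition exactly by exhibiting Examples \ref{ex::counter_group_1}--\ref{ex::counter_group_5}, one inconsistent presentation per test-equation type in which only that type fails (verified by the collection computations in Appendix \ref{appendixA}), and concluding by the same contrapositive observation you give. The only cosmetic difference is that inconsistency of each example follows at once from the easy direction of Theorem \ref{thm::polpre} (a failing test equation already gives two distinct reduced forms of one element), so no separate order or coset count is needed.
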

\section{Consistency for nilpotent associative algebra presentations} \label{sec:naalgs}

\subsection{Nilpotent associative algebras}
Given $\P_r(e,b)$ and thus $A_r(e,b)$ we
define its ideals $A_i = \langle a_i, \ldots, a_n \rangle$ and 
the indices $s_i = [A_i : A_{i+1}]$. Then $s_i \in \N \cup \{ \infty \}$
for $1 \leq i \leq n$. The sequence $s = (s_1, \ldots, s_n)$ are the
{\em indices} of $\P_r(e,b)$ and the sequence $r = (r_1, \ldots, r_n)$
are the {\em relative orders} of this presentation. The next lemma
follows readily from the relations of the presentation $\P_r(e,b)$.

\begin{lemma}
If $\P_r(e,b)$ is a nilpotent presentation, then the series
$A_r(e,b) = A_1 \geq A_2 \geq \ldots \geq A_n \geq A_{n+1} = \{1\}$ is
a central series through $A_r(e,b)$. In particular, $A_r(e,b)$ is
a nilpotent algebra.
\end{lemma}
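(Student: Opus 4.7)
The plan is to show that the filtration $A = A_1 \geq A_2 \geq \ldots \geq A_n \geq A_{n+1} = 0$ is in fact the $R$-module filtration spanned by tails of the generating list, and then to read off both centrality and nilpotency from the shape of relation \eqref{AR2}.

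First, I would show that $A_i$ equals the $R$-submodule $M_i$ generated by $a_i, a_{i+1}, \ldots, a_n$. The containment $M_i \subseteq A_i$ is immediate. For the reverse, note that \eqref{AR2} says that for any generators $a_j, a_k$ we have $a_j a_k \in M_{\max\{j,k\}+1}$. Hence for any $k \geq i$ and any $j$, both $a_j a_k$ and $a_k a_j$ lie in $M_{\max\{j,k\}+1} \subseteq M_{i+1} \subseteq M_i$. Since $M_i$ is closed under addition and $R$-scalars, extending bilinearly shows that $A \cdot M_i + M_i \cdot A \subseteq M_i$, so $M_i$ is a two-sided ideal of $A$ containing $a_i, \ldots, a_n$. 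Thus $A_i \subseteq M_i$, and consequently $A_i = M_i$.

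With this identification in hand, the central-series property is read off directly. For any $j$ and any $k \geq i$, \eqref{AR2} gives $a_j a_k \in A_{\max\{j,k\}+1} \subseteq A_{i+1}$, and symmetrically for $a_k a_j$. Extending bilinearly yields $A \cdot A_i \subseteq A_{i+1}$ and $A_i \cdot A \subseteq A_{i+1}$, i.e.\ each quotient $A_i/A_{i+1}$ is annihilated on both sides by $A$. Relation \eqref{AR1} is consistent with this picture, since its right-hand side already lies in $A_{i+1}$.

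Nilpotency then follows by a one-line induction: $A^1 = A_1$, and if $A^m \subseteq A_m$ then $A^{m+1} = A \cdot A^m \subseteq A \cdot A_m \subseteq A_{m+1}$. Since $A_{n+1} = 0$ we conclude $A^{n+1} = 0$, so $A_r(e,b)$ is nilpotent of class at most $n$.

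The only point that requires any care is the reduction step in the first paragraph: \eqref{AR2} is stated only for products of two generators, so one must argue that iterated products of generators also drop in the filtration. This is immediate because each application of \eqref{AR2} strictly increases the minimum index of the generators appearing, so the reduction terminates. I do not expect any serious obstacle here; the lemma is essentially a bookkeeping consequence of the form of the defining relations.
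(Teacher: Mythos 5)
Your proof is correct and is essentially the routine verification the paper intends: the paper offers no proof at all, stating only that the lemma ``follows readily from the relations'' \eqref{AR1} and \eqref{AR2}, and your argument (identify $A_i$ with the $R$-span of $a_i,\ldots,a_n$, read off centrality from \eqref{AR2}, induct for nilpotency) is exactly that verification. One small remark: the reduction of longer words is cleaner justified by induction on word length, since a single application of \eqref{AR2} inside a longer word need not strictly increase the minimum index occurring (the minimum may sit outside the replaced pair); this does not affect the validity of your argument.
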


\subsection{Cyclic extensions for associative algebras}

We will briefly recall he theory of cyclic extensions for the case of associative algebras. Note that in the following lemma we omit the case of 
non-trivial multiplications on $C$, as we will focus on nilpotent associative algebras. The proof consists of direct calculations and will be omitted 
here.

\begin{lemma}\label{lem::cycextalg}
Let $R$ be a field or the rings of integers. Let $N$ be an associative $R$-algebra and let $C=\langle c \rangle$ be an associative
$R$-algebra with trivial multiplication. Let $s\in N$ and let $\varphi_L, \varphi_R\colon N \ra N$ be $R$-linear maps.

\begin{items}
 \item[\rm (a)] Let $C$ be infinite and for all $b,b' \in N$ let the following identities be satisfied:

\begin{items}
 \item[\rm (i)] $\varphi_L(s) = \varphi_R(s)$,
 \item[\rm (ii)] $\varphi_L^2(b) = sb$ and $\varphi_R^2(b) = bs$,
 \item[\rm (iii)] $\varphi_L(bb')=\varphi_L(b)b'$ and $\varphi_R(bb')=b\varphi_R(b')$,
 \item[\rm (iv)] $\varphi_R(b)b'=b \varphi_L(b')$ and $\varphi_L(\varphi_R(b))=\varphi_R(\varphi_L(b))$.
\end{items}
 
 Then the product \[ (\lambda c, b)(\lambda' c, b') = (0, \lambda \lambda' s + \lambda \varphi_L(b') + \lambda' \varphi_R(b) + bb'), \]
where $\lambda,\lambda' \in R$ and $b,b' \in N$, turns the direct sum of $R$-modules $C \oplus N$ into an associative $R$-algebra.

 \item[\rm (b)] Let $C$ be finite with $mc=0$ and $p\in N$. Furthermore, for all $b,b' \in N$ let {\rm (i)-(iv)} be satisfied and additionally:

\begin{items}
 \item[\rm (v)] $pb=m\varphi_L(b)$,
 \item[\rm (vi)] $bp=m\varphi_R(b)$.
\end{items}

 Then the addition \[ (\lambda c, b)+(\lambda' c, b') =     \left \{ \begin{array}{ll}
     ((\lambda+\lambda')c, b+b') & \mbox{ if } \lambda+\lambda' < m \\
     ((\lambda+\lambda'-m)c, p+b+b') & \mbox{ if } \lambda+\lambda' \geq m 
    \end{array} \right. \]
and the product \[ (\lambda c, b)(\lambda' c, b') = (0, \lambda \lambda' s + \lambda \varphi_L(b') + \lambda' \varphi_R(b) + b b'), \]
where $\lambda,\lambda' \in R$ and $b,b' \in N$, turn $C \times N$ into an associative $R$-algebra.
\end{items}
\end{lemma}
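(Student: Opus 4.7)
\textbf{Proof proposal for Lemma \ref{lem::cycextalg}.} The plan is to verify, by direct computation, that the stated operations endow $C \oplus N$ (respectively $C \times N$) with the structure of an associative $R$-algebra. For part (a), the underlying additive module is the ordinary direct sum, so only $R$-bilinearity and associativity of the product require attention. Bilinearity is transparent from the formula, since each of the four summands $\lambda\lambda' s$, $\lambda\varphi_L(b')$, $\lambda'\varphi_R(b)$, and $bb'$ is separately $R$-bilinear in the pair $((\lambda, b),(\lambda', b'))$ by $R$-linearity of $\varphi_L, \varphi_R$ and bilinearity of the multiplication in $N$.

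For associativity in part (a), I would expand both $((\lambda_1 c, b_1)(\lambda_2 c, b_2))(\lambda_3 c, b_3)$ and $(\lambda_1 c, b_1)((\lambda_2 c, b_2)(\lambda_3 c, b_3))$ by applying the product formula twice on each side. Because the first coordinate of any product is $0$, only the $N$-component needs to be compared, and on each side it expands to a sum of exactly eight terms. Matching these pairwise, each of (i)--(iv) is used exactly once: the $\lambda_1\lambda_2\lambda_3$-coefficients agree by (i); the $\varphi_L^2$ and $\varphi_R^2$ terms collapse to $sb_3$ and $b_1 s$ by (ii); the pairs $\varphi_L(b_2 b_3)$ versus $\varphi_L(b_2)b_3$ and $\varphi_R(b_1 b_2)$ versus $b_1\varphi_R(b_2)$ match by (iii); and the crossed terms $\varphi_R(b_1)b_3$ versus $b_1\varphi_L(b_3)$, together with $\varphi_L \circ \varphi_R$ versus $\varphi_R \circ \varphi_L$, match by (iv). This pairing simultaneously shows that the list (i)--(iv) is tight.

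For part (b), there is preliminary work before the multiplication can be addressed. First I would verify that the piecewise-defined addition makes $C \times N$ into an abelian group: commutativity is immediate, additive inverses combine a reflection of the $C$-coordinate with a correction in the $N$-coordinate, and associativity follows from a case analysis on whether the two partial $C$-sums wrap around. Scalar multiplication is then checked routinely (by induction on $|k|$ when $R = \Z$, and directly when $R$ is a field). The product formula coincides with that of part (a), so its associativity reduces to the same eight-term comparison under (i)--(iv). Bilinearity is the new point: when $\lambda_1 + \lambda_2 \geq m$ the sum $(\lambda_1 c, b_1) + (\lambda_2 c, b_2)$ acquires a carry $p$ in the $N$-coordinate, and right-distributivity of the product forces the contribution of that carry, $pb'$, to equal $m\varphi_L(b')$, which is condition (v); the symmetric left-distributivity calculation forces (vi).

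The main obstacle I anticipate is not any individual identity but the bookkeeping in the eight-term associativity expansion: $\varphi_L$ and $\varphi_R$ appear in opposite positions on the two sides, so the matching of summands is delicate and easy to mis-index. Beyond that, the case analysis for the twisted addition in (b) adds some clerical burden. I expect no new conceptual difficulty, however, since the axioms (i)--(vi) are precisely the identities that make the formal manipulations close.
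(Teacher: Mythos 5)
Your plan for part (a) is correct and is exactly the direct verification the paper has in mind (the paper omits it as ``direct calculations''): bilinearity is immediate and the eight-term expansion of the two triple products matches up using (i)--(iv) precisely as you describe. The gap is in part (b), in the distributivity check. When $\lambda_1+\lambda_2\geq m$, the carry $p$ enters the product $\bigl((\lambda_1 c,b_1)+(\lambda_2 c,b_2)\bigr)(\lambda_3 c,b_3)$ in \emph{two} places, not one: through the $bb'$-term, giving $pb_3$, and through the $\lambda'\varphi_R(b)$-term, giving $\lambda_3\varphi_R(p)$; at the same time the coefficient of $s$ drops from $(\lambda_1+\lambda_2)\lambda_3$ to $(\lambda_1+\lambda_2-m)\lambda_3$ and the coefficient of $\varphi_L(b_3)$ drops by $m\,$. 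Since the two products on the other side both have first coordinate $0$, no carry occurs there, and right distributivity is equivalent to
\[
\lambda_3\bigl(\varphi_R(p)-ms\bigr)+\bigl(pb_3-m\varphi_L(b_3)\bigr)=0 \qquad \mbox{for all } \lambda_3,\ b_3,
\]
that is, to condition (v) \emph{together with} $\varphi_R(p)=ms$; symmetrically, left distributivity requires (vi) together with $\varphi_L(p)=ms$. Your sketch attributes the entire carry contribution to $pb'$ and concludes that (v) and (vi) suffice; that step fails.

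Moreover, $\varphi_L(p)=\varphi_R(p)=ms$ does not follow from (i)--(vi): take $N=\Z a$ with zero multiplication, $m=2$, $\varphi_L=\varphi_R=0$, $s=p=a$. All of (i)--(vi) hold, yet $\bigl((c,0)+(c,0)\bigr)(c,0)=(0,a)(c,0)=(0,0)$ while $(c,0)(c,0)+(c,0)(c,0)=(0,2a)\neq(0,0)$, so the stated operations are not distributive. Hence the verification you propose cannot be completed from the listed hypotheses alone: a correct write-up must either add $\varphi_L(p)=\varphi_R(p)=ms$ to the hypotheses of part (b) or derive these identities in the situation where the lemma is applied (in the setting of Lemma \ref{lem::nilalgind} they correspond to the test equations \eqref{nileqalg::test2} and \eqref{nileqalg::test3} with $i=j=1$). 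Note this is as much a flaw of the statement as of your proposal, and since the paper suppresses its own computation there is nothing to compare against on this point; but as written, your argument for (b) does not close, whereas the rest (abelian-group and scalar checks for the twisted addition, associativity of the product being the same computation as in (a)) is fine.
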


\subsection{The proof of Theorem \ref{algconst}}
Let $F_k$ denote the free associative algebra on $a_k, \ldots, a_n$, 
let $A^{[k]}$ be the algebra defined by $\P_r^{[k]}(e,b)$ and let $A_k$ be
the subalgebra of $A$ generated by $a_k, \ldots, a_n$. As in the group case
define epimorphisms $\theta_k \colon F_k \ra A^{[k]},\, a_j \ms a_j (k \leq j \leq n)$ 
and $\rho_k \colon A^{[k]} \ra A_k,\, a_j \ms a_j (k \leq j \leq n)$ to obtain

\begin{align*}
F_k \stackrel{\theta_k}{\longrightarrow}
A^{[k]}\stackrel{\rho_k}{\longrightarrow} A_k. 
\end{align*}

Again neither of the epimorphisms $\theta_k$ or $\rho_k$ are isomorphisms in 
general. We have the following lemma as a consequence of consistency.

\begin{lemma} \label{lem::algmono}
The presentation $\P_r(e,b)$ is consistent if and only if $\rho_i$ 
is an isomorphism for $1 \leq i \leq n$.
\end{lemma}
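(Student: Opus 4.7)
The plan is to mirror the proof of the group analogue (Lemma \ref{lem::grpmono}) as closely as possible, exploiting the fact that consistency is equivalent to uniqueness of reduced forms in $A = A_r(e,b)$, and that each $\rho_i$ is already surjective by construction. So the content of the statement is really about the injectivity of each $\rho_i$ corresponding exactly to the uniqueness of reduced forms.

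For the forward direction, assume $\P_r(e,b)$ is consistent. Let $u \in A^{[i]}$ and pick any lift $\tilde u \in F_i$. Running collection for nilpotent algebras (as described at the start of Section \ref{sec:naalgs}) on $\tilde u$ using the relations of $\P_r^{[i]}$ produces a reduced form $\sum_{j \geq i} x_j a_j$ representing $u$ in $A^{[i]}$, with $0 \leq x_j < r_j$ whenever $r_j < \infty$. Because the relations of $\P_r^{[i]}$ are precisely the subset of the relations of $\P_r$ that involve only $a_i, \ldots, a_n$, this same expression is also a reduced form for $\rho_i(u)$ in $A$. If $\rho_i(u) = 0$, consistency of $\P_r(e,b)$ forces every $x_j$ to vanish, so $u = 0$ and $\rho_i$ is injective.

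For the reverse direction, assume that every $\rho_i$ is an isomorphism. From the shape of the relations \eqref{AR2} one checks that $A_{k+1}$ is a two-sided ideal of $A$ and $A_k A_k \subseteq A_{k+1}$, so each quotient $A_k/A_{k+1}$ is an $R$-module with trivial multiplication, generated by $a_k + A_{k+1}$. Using the isomorphism $\rho_k$ to transfer the presentation data, the only $R$-linear relation imposed on this generator that does not already lie in $A_{k+1}$ is $r_k a_k \equiv 0 \pmod{A_{k+1}}$ (coming from \eqref{AR1}, when $r_k < \infty$); hence $A_k/A_{k+1} \cong R/r_k R$ (or $R$ itself when $r_k = \infty$). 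Now suppose $\sum x_i a_i$ and $\sum y_i a_i$ are two reduced forms representing the same element of $A$. Reducing modulo $A_2$ and using the identification $A_1/A_2 \cong R/r_1 R$ together with the reduced-form range $0 \leq x_1,y_1 < r_1$ yields $x_1 = y_1$. The tail $\sum_{i \geq 2}(x_i - y_i) a_i$ then sits in $A_2$; via the isomorphism $\rho_2 \colon A^{[2]} \to A_2$ we pull it back and repeat the argument inductively, obtaining $x_i = y_i$ for every $i$ and therefore consistency.

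The only genuinely delicate step is the identification $A_k/A_{k+1} \cong R/r_k R$: one must be careful that passing through $\rho_k^{-1}$ does not introduce spurious $R$-linear relations on $a_k$ modulo $A_{k+1}$, and that the products $a_j a_k$ and $a_k a_j$ truly land in $A_{k+1}$ (which they do, by the $\ell = \max\{i,j\}$ convention in \eqref{AR2}). This is the point at which the simultaneous isomorphism hypothesis on all $\rho_i$ is used rather than the isomorphism of $\rho_k$ alone, since the induction step demands the same structural description one level deeper in the filtration. Over a field this is immediate (the quotient is one-dimensional), while over $R = \Z$ it requires the reduced-form range condition $0 \leq x_k, y_k < r_k$ to conclude equality from a congruence.
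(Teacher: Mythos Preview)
The paper does not actually supply a proof of this lemma; it is stated as ``a consequence of consistency,'' mirroring the group-theoretic Lemma~\ref{lem::grpmono}, for which the paper only cites \cite[p.~421]{Sim94}. So there is nothing to compare against, and your job was genuinely to furnish the argument.

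Your proof is correct in both directions. The forward direction is clean: a reduced form computed inside $A^{[i]}$ using only the relations of $\P_r^{[i]}(e,b)$ is also a reduced form for its image in $A$, and uniqueness of reduced forms in $A$ forces all coefficients to vanish, giving injectivity of $\rho_i$. For the reverse direction, the key identification $A_k/A_{k+1}\cong R/r_kR$ is exactly right; the cleanest way to justify it is that $\rho_k$ carries the subalgebra of $A^{[k]}$ generated by $a_{k+1},\ldots,a_n$ isomorphically onto $A_{k+1}$, whence $A_k/A_{k+1}\cong A^{[k]}/A^{[k]}_{k+1}$, and the latter quotient is read off directly from $\P_r^{[k]}(e,b)$: modulo that subalgebra every product relation and every relation \eqref{AR1} with index $>k$ dies, leaving only $r_ka_k\equiv 0$. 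One small correction to your closing paragraph: the single identification $A_k/A_{k+1}\cong R/r_kR$ uses only that $\rho_k$ is an isomorphism, not the full simultaneous hypothesis; the remaining $\rho_i$ are consumed one at a time as your induction descends through the filtration. Also, the detour through $\rho_2^{-1}$ into $A^{[2]}$ is unnecessary: you can stay inside $A$ throughout, using $A_k/A_{k+1}\cong R/r_kR$ at each step.
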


The presentation $\P^{[n]}_r(e,b)$ is always consistent, since 
it has only one generator $a_n$ and at most one relation. If the implication 
\[ \P^{[i+1]}_r(e,b) \mbox{ is consistent } 
   \Rightarrow  
   \P^{[i]}_r(e,b) \mbox{ is consistent } \]
follows from Equations \eqref{nileqalg::test1}-\eqref{nileqalg::test3} of Theorem 
\ref{algconst}, then $\P_r(e,b)$ is consistent by induction. As in the group case, 
it is sufficient to show this implication in the case $i=1$ only.

\medskip
Define the maps $\tau_L$ and $\tau_R$ by
\begin{align*}
 &\tau_L \colon F_2 \ra F_2,\;a_j \ms \sum \limits_{k=1}^n b_{j,1,k} a_k \mbox{\;\;\; and \;\;\;} \forall v,w \in F_2\colon \tau_L(vw)=\tau_L(v)w \\
 &\tau_R \colon F_2 \ra F_2,\;a_j \ms \sum \limits_{k=1}^n b_{1,j,k} a_k \mbox{\;\;\; and \;\;\;} \forall v,w \in F_2\colon \tau_R(vw)=v\tau_R(w) \\
\end{align*}

Furthermore, let
\[ s = \sum \limits_{k=1}^n b_{1,1,k} a_k, \]
and in the case $r_1 < \infty$, we also define
\[ p = \sum \limits_{k=1}^n e_{1,k} a_k. \]

\begin{lemma} \label{lem:nilalgindlinear}
Assume by induction that $\P^{[2]}_r(e,b)$ is consistent and Equations \eqref{nileqalg::test1} - \eqref{nileqalg::test3} hold, then $\tau_L$ and $\tau_R$ induce $R$-linear 
maps $\varphi_L, \varphi_R \colon A^{[2]} \ra A^{[2]}$.
\end{lemma}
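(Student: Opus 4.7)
The plan is to verify that $\tau_L$ and $\tau_R$ each map the defining ideal $I^{[2]}$ of $A^{[2]}$ in $F_2$ into itself, so that they descend to $R$-linear endomorphisms $\varphi_L,\varphi_R$ of $A^{[2]} = F_2/I^{[2]}$. Since both maps are $R$-linear on $F_2$ by construction, with $\tau_L(vw)=\tau_L(v)w$ and $\tau_R(vw)=v\tau_R(w)$, and since $I^{[2]}$ is two-sided, it suffices to check that $\tau_L(R),\tau_R(R)\in I^{[2]}$ for each generating relator $R$. The generators fall into two types: Type 1 relators $r_i a_i-\sum_k e_{i,k}a_k$ (for $i\geq 2$ with $r_i<\infty$) and Type 2 relators $a_j a_i-\sum_k b_{i,j,k}a_k$ (for $i,j\geq 2$).

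For the Type 2 case under $\tau_L$, I would use the identification $\tau_L(a_m)=c(a_1 a_m)$ to rewrite $\tau_L(a_j a_i-\sum_k b_{i,j,k}a_k) = c(a_1 a_j)\cdot a_i - \sum_k b_{i,j,k}\, c(a_1 a_k)$. The test equation \eqref{nileqalg::test1} with $a_1$ taken as the outermost generator reads $c(c(a_1 a_j)a_i)=c(a_1\, c(a_j a_i))=\sum_k b_{i,j,k}\, c(a_1 a_k)$, so the two terms of $\tau_L(R)$ agree as reduced forms in $F_1$. Since $c(a_1 a_m)$ lies in the span of $a_{m+1},\ldots,a_n$ for $m\geq 2$, all expressions in sight remain in $F_2$; collection in $F_1$ restricted to such expressions coincides with collection in $F_2$, and by the inductive consistency of $\P^{[2]}_r(e,b)$ equal reduced forms lift to equality in $A^{[2]}$, giving $\tau_L(R)\in I^{[2]}$. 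The Type 1 case is handled identically, using \eqref{nileqalg::test3} with $a_j:=a_1$ to produce $c(r_i c(a_1 a_i))=c(a_1\, c(r_i a_i))=\sum_k e_{i,k}c(a_1 a_k)$. For $\tau_R$ the argument is entirely symmetric: \eqref{nileqalg::test1} with $a_1$ as the innermost generator handles Type 2 relators, and \eqref{nileqalg::test2} with $a_i:=a_1$ handles Type 1 relators.

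The remaining subtlety is that the test equations are asserted only under weight bounds. When $w(a_1)+w(a_j)+w(a_i)>d$ (or the analogous bound fails for Type 1), the specialized test equation is unavailable; however, the weight axioms $b_{i,j,k}\neq 0\Rightarrow w(a_k)\geq w(a_i)+w(a_j)$ and $e_{i,k}\neq 0\Rightarrow w(a_k)\geq w(a_i)$ forbid generators of weight exceeding $d$, so any monomial of weight $>d$ collapses to zero under collection in $A^{[2]}$. In this regime every summand of $\tau_L(R)$ and $\tau_R(R)$ consists of such monomials, whence $\tau_L(R),\tau_R(R)\in I^{[2]}$ vacuously. I expect the main obstacle to be precisely this combination of index bookkeeping---matching each relator to the correct specialization of the test equations---and the weight-case analysis; once those are in hand, the lemma is a direct consequence of the test equations and of the inductive consistency of $\P^{[2]}_r(e,b)$.
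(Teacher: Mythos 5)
Your argument is correct and is essentially the paper's own proof: you reduce to the generating relators of the defining ideal $I$ of $A^{[2]}$ in $F_2$ (using $\tau_L(vw)=\tau_L(v)w$, $\tau_R(vw)=v\tau_R(w)$ and that $I$ is two-sided), and then apply the test equations \eqref{nileqalg::test1}--\eqref{nileqalg::test3} specialized to index $1$ together with the inductive consistency of $\P^{[2]}_r(e,b)$ to conclude $\tau_L(R),\tau_R(R)\in I$, exactly as the paper does via its equivalence $(\ast)$ and the explicit coefficient computation. Your explicit handling of the triples that violate the weight bound (where the quoted test equation is not available but all relevant structure constants vanish, since no generator has weight exceeding $d$) addresses a point the paper passes over silently when it invokes \eqref{nileqalg::test1} for all $1\leq i,j,k\leq n$, and your reasoning there is the right one.
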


\begin{proof}
We only give a proof for the case $\tau_L$ and $\varphi_L$ and note that an analogous proof works for the case $\tau_R$ and $\varphi_R$. Denote by $I$ the ideal such that the following sequence is exact:
\[ 0 \ra I \ra F_2 \ra A^{[2]} \ra 0. \]
Then $\tau_L$ induces an $R$-linear map $\varphi_L$ if and only if $\tau_L(I) \subseteq I$. As an ideal $I$ is generated by elements of the form
\[ a_j a_i - \sum \limits_{k=1}^n b_{i,j,k} a_k \mbox{\;\;\; and \;\;\;} r_i a_i - \sum \limits_{k=1}^n e_{i,k} a_k \]
for $i,j \in \{2,\ldots,n\}$. A crucial fact, which is used multiple times in the proof, is that the consistency of $\P^{[2]}_r(e,b)$ implies:
\[ \forall e,f \in F_2 : c(e) = c(f) \Lra e-f \in I. \;\;\;(\ast)\]

We then have
\begin{align*}
 \tau_L \left( a_j a_i - \sum \limits_{k=1}^n b_{i,j,k} a_k \right) &= \sum \limits_{\ell = 1}^n b_{j,1,\ell} a_\ell a_i - \sum \limits_{k=1}^n b_{i,j,k} \left( \sum \limits_{m=1}^n b_{k,1,m} a_m \right) \\
 &= \sum \limits_{\ell=1}^n b_{j,1,\ell} \left( \sum \limits_{t=1}^n b_{i,\ell,t} a_t \right) - \sum \limits_{k,m=1}^n b_{i,j,k} b_{k,1,m} a_m \\
 &= \sum \limits_{\ell,t=1}^n b_{j,1,\ell} b_{i,\ell,t} a_t - \sum \limits_{k,m=1}^n b_{i,j,k} b_{k,1,m} a_m \\
 &= \sum \limits_{\ell,t=1}^n \left( b_{j,1,\ell} b_{i,\ell,t} - b_{i,j,\ell} b_{\ell, 1, t} \right) a_t && (1)
\end{align*}

Next we use the fact that for all $1 \leq i,j,k \leq n$ the following holds:
\[ c(a_k c(a_j a_i)) = c(c(a_k a_j) a_i). \]
Evaluating both sides yields
\[ c(a_k c(a_j a_i)) = c \left( a_k \sum \limits_{\ell=1}^n b_{i,j,\ell} a_\ell \right) = c \left( \sum \limits_{\ell,t=1}^n b_{\ell,k,t} b_{i,j,\ell} a_t \right) \]
and
\[ c(c(a_k a_j) a_i) = c \left( \sum \limits_{\ell=1}^n b_{j,k,\ell} a_\ell a_i \right) = c \left( \sum \limits_{\ell,t=1}^n b_{j,k,\ell} b_{i,l,t} a_t \right). \]
Together with $(\ast)$ and with $k=1$ we deduce 
\[ \tau_L \left( a_j a_i - \sum \limits_{k=1}^n b_{i,j,k} a_k \right) = \sum \limits_{\ell,t=1}^n \left( b_{j,1,\ell} b_{i,\ell,t} - b_{i,j,\ell} b_{\ell, 1, t} \right) a_t \in I, \]

Next we consider the elements of the form, $r_ia_i - \sum_l e_{i,j} a_l$. We compute
\begin{align*}
\tau_L\left(r_ia_i - \sum_l e_{i,l} a_l\right) &= r_i\tau_L\left(a_i\right) - \sum_l e_{i,l} \tau_L\left(a_l\right)\\
&= \sum_k r_i b_{i,1,k}a_k - \sum_{l,m} e_{i,l} b_{l,1,m}a_m, && (2)
\end{align*}

Evaluating both sides of the consistency relations
\[ c(r_ic(a_1a_i)) = c(a_1c(r_ia_i)) \text{ for } 2 \leq i \leq n \]
we deduce
\[ c(r_ic(a_1a_i)) = c\left(r_i \sum_k b_{i,1,k}a_k\right) \mbox{\;\;\; and \;\;\;} c(a_1c(r_ia_i)) = c\left( \sum_{l,m} e_{i,l}b_{l,1,m}a_m \right). \]
Similar to the first case this allows us to deduce that
\[ \tau_L\left(r_ia_i - \sum_l e_{i,l} a_l\right) \in I. \]

Finally, note that a general element in $I$ is an $R$-linear combination of terms of the form $\sum_lx_lf_li_lg_l, \sum_lx_li_lg_l,\sum_lx_lf_li_l$ or $\sum_lx_li_l$,
where $f_l, g_l \in F_2, x_l \in R$ and $i_l \in I \cap \tau_L^{-1}(I)$. A straigtforward calculation shows that each of these linear combinations is mapped to
$I$ under $\tau_L$. Therefore $\varphi_L$ is induced by $\tau_L$.

\end{proof}

\begin{lemma} \label{lem::nilalgind}
Assume by induction that $\P^{[2]}_r(e,b)$ is consistent and Equations \eqref{nileqalg::test1} - \eqref{nileqalg::test3} hold, then the induced maps $\varphi_L, \varphi_R \colon A^{[2]} \ra A^{[2]}$ 
have the properties (i)-(vi) given in Lemma \ref{lem::cycextalg}.
\end{lemma}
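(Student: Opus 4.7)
The plan is to exploit the fact that, within the as-yet-unconstructed algebra $A^{[1]}$, the map $\varphi_L$ should represent ``left multiplication by $a_1$'' and $\varphi_R$ should represent ``right multiplication by $a_1$'', while $s$ represents $a_1 \cdot a_1$ and, when $r_1 < \infty$, $p$ represents $r_1 \cdot a_1$. Under these readings every identity in Lemma \ref{lem::cycextalg} is an instance of associativity involving one, two, or three occurrences of $a_1$, and each corresponds to a specific specialization of one of the test equations \eqref{nileqalg::test1}--\eqref{nileqalg::test3} in which the relevant indices are set to $1$.

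Property (iii) is immediate from the product rules $\tau_L(vw)=\tau_L(v)w$ and $\tau_R(vw)=v\tau_R(w)$ built into the definitions, since these identities descend from $F_2$ to $A^{[2]}$. For the remaining properties, Lemma \ref{lem:nilalgindlinear} gives that $\varphi_L$ and $\varphi_R$ are $R$-linear, and consistency of $\P^{[2]}_r(e,b)$ guarantees that every element of $A^{[2]}$ is an $R$-linear combination of $a_2,\ldots,a_n$, so it suffices to verify each identity on generators $b=a_j$ and $b'=a_i$. The intended matching of properties to test-equation instances is: (i) uses \eqref{nileqalg::test1} with $k=j=i=1$; the two halves of (ii) use \eqref{nileqalg::test1} with $k=j=1$ (and $i$ generic) and with $j=i=1$ (and $k$ generic), respectively; the first half of (iv) uses \eqref{nileqalg::test1} with the middle index $j=1$, and the second with $k=i=1$; property (v) uses \eqref{nileqalg::test2} with $j=1$, and (vi) uses \eqref{nileqalg::test3} with $i=1$.

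In each case a direct evaluation shows that the two sides of the chosen test equation collect (inside $A^{[2]}$, using consistency of $\P^{[2]}_r(e,b)$) to exactly the two sides of the desired identity. To illustrate, for the first half of (iv) the substitution $(k,j,i)\mapsto(a_j,a_1,a_i)$ in \eqref{nileqalg::test1} produces $c(a_j c(a_1 a_i)) = c(c(a_j a_1) a_i)$; the left hand side reduces to $c(a_j\tau_L(a_i))$, which represents $a_j\varphi_L(a_i)$ in $A^{[2]}$, while the right hand side reduces to $c(\tau_R(a_j) a_i)$, representing $\varphi_R(a_j) a_i$. The other cases follow the same recipe with only the bookkeeping of indices changing.

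The only subtle point I foresee is that the test equations are asserted only under the corresponding weight conditions. In each target identity, however, the total weight of either side is $w(a_1)$ (or $2w(a_1)$, or $3w(a_1)$) plus the weights of the generators involved; whenever this sum exceeds $d$, the weight restrictions on the structure constants $b_{\cdot,\cdot,\cdot}$ and $e_{\cdot,\cdot}$ force both sides of the identity to vanish in $A^{[2]}$, so the identity is vacuous. Hence the remaining work is mechanical: carry out the collection calculations for the listed substitutions and dispose of the high-weight cases by this vanishing argument.
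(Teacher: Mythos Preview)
Your proposal is correct and follows essentially the same strategy as the paper: reduce to generators by $R$-linearity, then match each of the identities (i)--(vi) to a specialisation of one of the test equations \eqref{nileqalg::test1}--\eqref{nileqalg::test3} with the appropriate indices set to $1$, reading $\varphi_L$ as ``left multiplication by $a_1$'' and $\varphi_R$ as ``right multiplication by $a_1$''. Your table of specialisations agrees with the paper's calculations.

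Two small differences are worth noting. First, you obtain (iii) directly from the built-in rule $\tau_L(vw)=\tau_L(v)w$ (and its $\tau_R$ analogue), which passes to the quotient since $\varphi_L$ is well-defined by Lemma \ref{lem:nilalgindlinear}; the paper instead re-derives (iii) on generators via \eqref{nileqalg::test1}. Your route is slightly cleaner. Second, you explicitly discuss the weight hypotheses on \eqref{nileqalg::test1}--\eqref{nileqalg::test3}: when the relevant weight sum exceeds $d$, the constraints $w(a_k)\geq w(a_i)+w(a_j)$ on nonzero structure constants force every term on both sides to have weight $>d$, hence to vanish in $A^{[2]}$, so the identity is vacuous. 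The paper's proof does not spell this out; your observation fills that gap and is correct.
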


\begin{proof}
It is sufficient to verify the properties given in Lemma \ref{lem::cycextalg} for the generators $a_1,\ldots,a_n$. We have
\begin{eqnarray*}
&& c(\tau_L(s)) \\
 &&= c(a_1 s)
   \;\; \mbox{ via def of $\tau_L$} \\
 &&= c(a_1 c(a_1 a_1))
   \;\; \mbox{ via def of $s$ and collection} \\
 &&= c(c(a_1 a_1) a_1)
   \;\; \mbox{ by Equation \eqref{nileqalg::test1}}  \\
 &&= c(s a_1)
   \;\; \mbox{ via def of $s$ and collection} \\
 &&= c(\tau_R(s))
   \;\; \mbox{ via def of $\tau_R$}
\end{eqnarray*}

As every collection step (on these words) corresponds to a relation of $A^{[2]}$, it follows that $\tau_L(s)=\tau_R(s)$ is a relation
of $A^{[2]}$. Hence $\varphi_L(s) = \varphi_R(s)$, i.e. (i) of Lemma \ref{lem::cycextalg}, holds. Note that by abuse of notation $s \in A^{[2]}$ is the image of $s \in F_2$
under the natural projection.

\medskip
To show (ii) of Lemma \ref{lem::cycextalg} we calculate
\begin{eqnarray*}
&& c(\tau_L^2(a_k)) \\
 &&= c(a_1 c(a_1 a_k))
   \;\; \mbox{ via def of $\tau_L$ and collection} \\
 &&= c(c(a_1 a_1) a_k)
   \;\; \mbox{ by Equation \eqref{nileqalg::test1}}  \\
 &&= c(s a_k)
   \;\; \mbox{ via def of $s$ and collection} 
\end{eqnarray*}

It follows that $\varphi_L^2(a_k)=sa_k$ holds for all $k \in \{2,\ldots,n\}$ and an analogous calculation shows that $\varphi_R^2(a_k)=a_k s$ holds for all $k\in \{2,\ldots,n\}$.

\medskip
To show (iii) of Lemma \ref{lem::cycextalg} we calculate
\begin{eqnarray*}
&& c(\tau_L(a_j a_k)) \\
 &&= c(a_1 c(a_j a_k))
   \;\; \mbox{ via def of $\tau_L$ and collection} \\
 &&= c(c(a_1 a_j) a_k)
   \;\; \mbox{ by Equation \eqref{nileqalg::test1}}  \\
 &&= c(\tau_L(a_j) a_k)
   \;\; \mbox{ via def of $\tau_L$ and collection} 
\end{eqnarray*}

It follows that $\varphi_L(a_j a_k)=\varphi_L(a_j) a_k$ holds for all $j,k \in \{2,\ldots,n\}$ and an analogous calculation shows that $\varphi_R(a_j a_k)=a_j \varphi_R(a_k)$ holds for all $j, k\in \{2,\ldots,n\}$.

\medskip
To show (iv) of Lemma \ref{lem::cycextalg} we calculate
\begin{eqnarray*}
&& c(\tau_R(a_j) a_k) \\
 &&= c(c(a_j a_1) a_k)
   \;\; \mbox{ via def of $\tau_R$ and collection} \\
 &&= c(a_j c(a_1 a_k))
   \;\; \mbox{ by Equation \eqref{nileqalg::test1}}  \\
 &&= c(a_j \tau_L(a_k))
   \;\; \mbox{ via def of $\tau_L$ and collection} 
\end{eqnarray*}

It follows that $\varphi_R(a_j) a_k= a_j\varphi_L(a_k)$ holds for all $j,k \in \{2,\ldots,n\}$. We also have
\begin{eqnarray*}
&& c(\tau_L(\tau_R(a_k)) \\
 &&= c(a_1 c(a_k a_1))
   \;\; \mbox{ via def of $\tau_L$,$\tau_R$ and collection} \\
 &&= c(c(a_1 a_k) a_1)
   \;\; \mbox{ by Equation \eqref{nileqalg::test1}}  \\
 &&= c(\tau_R(\tau_L(a_k))
   \;\; \mbox{ via def of $\tau_L$,$\tau_R$ and collection} 
\end{eqnarray*}
and $\varphi_L(\varphi_R(a_k)) = \varphi_R(\varphi_L(a_k))$ follows for all $k \in \{2,\ldots,n\}$.

\medskip
It remains to show $(v)$ and $(vi)$ in the case $r_1 < \infty$. To show $(v)$ we compute
\begin{eqnarray*}
&& c(r_1 \tau_L(a_k)) \\
 &&= c(r_1 c(a_1 a_k))
   \;\; \mbox{ via def of $\tau_L$ and collection} \\
 &&= c(c(r_1a_1)a_k)
   \;\; \mbox{ by Equation \eqref{nileqalg::test2}}  \\
 &&= c(pa_k)
   \;\; \mbox{ via def of $p$ and collection} 
\end{eqnarray*}
and $pa_k = r_1\varphi_L(a_k)$ follows for all $k \in \{2,\ldots,n\}$.

\medskip
To show $(vi)$ we compute
\begin{eqnarray*}
&& c(r_1 \tau_R(a_k)) \\
 &&= c(r_1 c(a_k a_1))
   \;\; \mbox{ via def of $\tau_R$ and collection} \\
 &&= c(a_k c(r_1a_1))
   \;\; \mbox{ by Equation \eqref{nileqalg::test3}}  \\
 &&= c(a_k p)
   \;\; \mbox{ via def of $p$ and collection} 
\end{eqnarray*}
and $a_k p = r_1\varphi_R(a_k)$ follows for all $k \in \{2,\ldots,n\}$.
\end{proof}

We now give the proof of Theorem \ref{algconst}.\\

\begin{proof}
If the given presentation is consistent, all Equations \eqref{nileqalg::test1} - \eqref{nileqalg::test3} 
of Theorem \ref{algconst} must hold, since the equations contain words that represent the same elements in the 
algebra $A_1 = A^{[1]}$. 

Now assume that all Equations \eqref{nileqalg::test1} - \eqref{nileqalg::test3} of Theorem \ref{algconst} are valid. 
The presentation $\P^{[n]}_r(e,b)$ is consistent. Assume by induction that the presentation $\P^{[i+1]}_r(e,b)$ is consistent. 
After renaming of indices, Lemmas \ref{lem:nilalgindlinear} and \ref{lem::nilalgind}, together with Lemma \ref{lem::cycextalg} shows that $A^{[i]}$ is an extension of $A^{{[i+1]}}$.
The presentation $\P^{[i]}_r(e,b)$ is then consistent by Lemma \ref{lem::algmono}. It follows by induction on $i$ that the 
presentation $\P_r(e,b) = \P^{[1]}_r(e,b) $ is consistent.

\subsection{Examples and irredundancy}
Similar to the group case we give, for each test equation, an example where precisely that sort of test equations fail.
Detailed calculations can be found in Appendix \ref{appendixB}.

\begin{example} \label{ex::counter_alg_1}
For the presentation
\[ \langle a_1, a_2, a_3 \mid a_1^2 = a_2, a_1a_2=a_3, \mbox{ all other products trivial } \rangle \]
only Equations of the form \eqref{nileqalg::test1} fail. 
\end{example}

\begin{example} \label{ex::counter_alg_2}
For the presentation
\[ \langle a_1, a_2, a_3 \mid 2a_1 = 0, a_1 a_2 = a_3, \mbox{ all other products trivial } \rangle \]
only Equations of the form \eqref{nileqalg::test2} fail.
\end{example}

\begin{example} \label{ex::counter_alg_3}
For the presentation
\[ \langle a_1, a_2, a_3 \mid 2a_1 = 0, a_2 a_1 = a_3, \mbox{ all other products trivial } \rangle \]
only Equations of the form \eqref{nileqalg::test3} fail. 
\end{example}

Together, these examples prove the following proposition:

\begin{proposition}
The Equations \eqref{nileqalg::test1} - \eqref{nileqalg::test3} given in Theorem \ref{algconst} are irredundant, that is, no proper subset of them is sufficient to detect 
inconsistency without additional assumptions on the presentation.
\end{proposition}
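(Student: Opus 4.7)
The plan is to argue directly from the three examples immediately preceding the proposition. Each of Examples \ref{ex::counter_alg_1}, \ref{ex::counter_alg_2}, \ref{ex::counter_alg_3} exhibits a nilpotent presentation that is inconsistent, and such that exactly one of the three families \eqref{nileqalg::test1}, \eqref{nileqalg::test2}, \eqref{nileqalg::test3} of test equations fails while the other two families hold. Assuming these verifications (as indicated by the reference to Appendix \ref{appendixB}), the proposition follows by a short pigeonhole-style argument.

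The key steps are as follows. First, I would fix a proper subset $S \subsetneq \{\eqref{nileqalg::test1}, \eqref{nileqalg::test2}, \eqref{nileqalg::test3}\}$. By properness, $S$ omits at least one of the three families; call it $(Ak)$. Next, I would invoke the example (Example \ref{ex::counter_alg_1}, \ref{ex::counter_alg_2}, or \ref{ex::counter_alg_3} respectively) in which only the family $(Ak)$ fails. For that presentation, every equation in $S$ holds, yet the presentation is inconsistent (which is precisely why the omitted family fails, by Theorem \ref{algconst}). Hence the criterion ``all equations in $S$ hold'' cannot detect inconsistency in general, so $S$ is not sufficient. Since $S$ was an arbitrary proper subset, no proper subset suffices, which is the definition of irredundancy.

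The only non-trivial content lies in verifying the three examples, i.e., checking that in each case the stated family genuinely fails while the other two hold, and that the presentation is inconsistent. This is the step I would expect to require the most care, although it is mechanical: for each of the three presentations one sets up the collection function $c$ explicitly, evaluates each instance of \eqref{nileqalg::test1}, \eqref{nileqalg::test2}, \eqref{nileqalg::test3} on its generators, and confirms that exactly one family yields a discrepancy. Because the generating sets have only three elements and almost all products are declared trivial, these checks reduce to a finite (and short) list of direct computations, which is why they have been relegated to Appendix \ref{appendixB}. Once those computations are accepted, the proposition follows immediately from the pigeonhole argument above.
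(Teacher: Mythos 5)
Your proposal is correct and follows essentially the same route as the paper: the paper also derives the proposition directly from Examples \ref{ex::counter_alg_1}--\ref{ex::counter_alg_3} (verified in Appendix \ref{appendixB}), each of which is an inconsistent presentation where exactly one family of test equations fails. Your explicit pigeonhole formulation merely spells out the argument the paper leaves implicit.
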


\end{proof}
\section{Summary and Outlook}
We have demonstrated the connection between the theories of cyclic extensions and consistent presentations 
for the cases of polycyclic groups and nilpotent associative algebras. Similar ideas can be applied to 
other categories, like Lie rings, Leibniz algebras, Malcev algebras or Jordan algebras.

The general construction of an algorithm that responds to an input (a suitable description of an algebraic structure) with a complete and irredundant 
set of test equations is still an open problem.

\bibliographystyle{abbrv}
\bibliography{all}

\appendix
\section{Calculations for Examples \ref{ex::counter_group_1} - \ref{ex::counter_group_5}} \label{appendixA}
In this section we show the calculations behind Examples \ref{ex::counter_group_1} - \ref{ex::counter_group_5}. 
The examples are restated here for convenience.

\setcounter{defin}{12}

\begin{example} 
For the presentation 
\begin{align*}
\langle g_1,g_2,g_3 \mid &g_3g_1=g_1g_2,g_3g_2=g_2g_3^{-1},g_2g_1=g_1g_3 \\
&g_3{g_1^{-1}}={g_1^{-1}}g_2,g_3{g_2^{-1}}={g_2^{-1}}g_3^{-1},g_2{g_1^{-1}}={g_1^{-1}}g_3\rangle
\end{align*}
only Equation \eqref{eq::test1} fails, since the presentation $\mathcal P^{[2]}_r(e,a,b)$ is a consistent presentation for a semidirect product $\mathbb Z \rtimes \mathbb Z$. 
Any failing test equation must thus include $g_1$, which leaves Equations \eqref{eq::test1} and \eqref{eq::test4} (since all $r_i$ are infinite). Direct computation shows: 
\begin{align*}
c(g_3 g_2 g_1) &= g_1g_2g_3\\
c(g_2g_3^{-1}g_1) &= c(g_1g_3g_2^{-1})\\
&= g_1g_2^{-1}g_3^{-1},
\end{align*}
so Equation \eqref{eq::test1} fails, and

\begin{align*}
	c(g_3g_1^{-1}g_1) &= c(g_1^{-1}g_2g_1)\\
	&=c(g_1^{-1}g_1g_3)\\
	&=g_3
\end{align*}
and
\begin{align*}
	c(g_2g_1^{-1}g_1) &= c(g_1^{-1}g_3g_1)\\
	&=c(g_1^{-1}g_1g_2)\\
	&=g_2,
\end{align*} 
therefore both test Equations \eqref{eq::test4} are valid.
\end{example}

\begin{example}
	For the presentation 
	\begin{align*}
	\langle g_1,g_2,g_3\mid &g_3^2=1, g_3g_2=g_2g_3, g_3{g_2^{-1}}={g_2^{-1}}g_3,\\
	&g_3{g_1}={g_1}g_2,g_3{g_1^{-1}}={g_1^{-1}}g_2,g_2{g_1}={g_1}g_3, g_2{g_1^{-1}}={g_1^{-1}}g_3 \rangle
	\end{align*} 
	only Equation \eqref{eq::test2} fails: again, the corresponding group $H^{[2]}$ has a consistent presentation 
	(it is isomorphic to $\mathbb Z \times (\mathbb Z/(2 \mathbb Z)$), so if a test equation fails, 
	it must fail on a pair of words containing $g_1$. We check Equation \eqref{eq::test1}:
	\begin{align*}
	c(g_3g_2g_1) &= g_1g_2g_3\\
	c(g_2g_3g_1) &= c(g_1g_3g_2)\\
	&=g_1g_2g_3.
	\end{align*}
	Test Equation \eqref{eq::test2} fails for $i=1,j=3$:
	\begin{align*}
	c(g_3^2g_1) &= g_1g_2^2\\
	c(g_1) &= g_1
	\end{align*}
	The only other type of test Equation that can fail is Equation \eqref{eq::test4}:
	\begin{align*}
	c(g_3g_1^{-1}g_1) &= c(g_1^{-1}g_2g_1)\\
	&= c(g_1^{-1}g_1g_3)\\
	&=g_3
	\end{align*}
	\begin{align*}
	c(g_2g_1^{-1}g_1) &= c(g_1^{-1}g_3g_1)\\
	&= c(g_1^{-1}g_1g_2)\\
	&=g_2,
	\end{align*}
therefore Equation \eqref{eq::test4} is valid for all admissible pairs of indices.
\end{example}

\begin{example}
	For the presentation
	\begin{align*}
	\langle g_1,g_2 \mid & g_2{g_1}={g_1}g_2^{-1}, g_1^3=1 \rangle
	\end{align*} 
	only test Equation \eqref{eq::test3} fails. There is only one pair of indices for Equations \eqref{eq::test3} and \eqref{eq::test5} to check. Equation \eqref{eq::test3} fails:
	\begin{align*}
	c(g_2g_1^3) &= c(g_1^3g_2^{-1})\\
	&= g_2^{-1}\\
	&\neq c(g_2)
	\end{align*}
	Equation \eqref{eq::test5} does not fail:
	\begin{align*}
	c(g_1^4) &= g_1.
	\end{align*}
\end{example}

\begin{example}
	For the presentation
	\begin{align*}
	\langle g_1,g_2 \mid & g_2{g_1}={g_1}g_2^2, g_2{g_1^{-1}}={g_1^{-1}}g_2 \rangle
	\end{align*} 
	only test Equation \eqref{eq::test4} fails. Since there are only two generators and no finite $r_i$, Equation \eqref{eq::test4} is the only one to check:
	\begin{align*}
	c(g_2g_1^{-1}g_1) &= c(g_1^{-1}g_2g_1)\\
	&=c(g_1^{-1}g_1g_2^2)\\
	&=g_2^2\\
	&\neq g_2.
	\end{align*}
\end{example}

\begin{example}
	For the presentation
	\begin{align*}
	\langle g_1,g_2 \mid & g_2{g_1}={g_1}g_2^{-1},g_1^2=g_2 \rangle
	\end{align*} 
	only Equation \eqref{eq::test5} fails. The only test equations defined are Equation \eqref{eq::test3} and \eqref{eq::test5}. The unique Equation \eqref{eq::test3} does not fail:
	\begin{align*}
	c(g_2g_1^2) &=c(g_1^2g_2)\\
	&=g_2^2\\
	c(g_2^2) &= g_2^2.
	\end{align*}
	The unique Equation \eqref{eq::test5} fails:
	\begin{align*}
	c(g_1^3) &=c(g_2g_1)\\
	&=g_1g_2^{-1}\\
	c(g_1g_2) &= g_1g_2.
	\end{align*}
\end{example}

\section{Calculations for Examples \ref{ex::counter_alg_1} - \ref{ex::counter_alg_3}} \label{appendixB}
\setcounter{defin}{23}
\begin{example}
For the presentation
\[ \langle a_1, a_2, a_3 \mid a_1^2 = a_2, a_1a_2=a_3, \mbox{ all other products trivial } \rangle \]
only Equations of the form \eqref{nileqalg::test1} fail. We have $w(a_1)=1, w(a_2)=2, w(a_3)=3$ and $r_1=r_2=r_3=\infty$. We compute
\[ c(a_1 c(a_1 a_1)) = c(a_1 a_2) = a_3 \neq 0 = c(a_2 a_1) = c(c(a_1 a_1) a_1). \]
There are no other test equations in this case.
\end{example}

\begin{example}
For the presentation
\[ \langle a_1, a_2, a_3 \mid 2a_1 = 0, a_1 a_2 = a_3, \mbox{ all other products trivial } \rangle \]
only Equations of the form \eqref{nileqalg::test2} fail. We have $w(a_1)=w(a_2)=1, w(a_3)=2$ and $r_1=2,r_2=r_3=\infty$. We compute
\[ c(2 c(a_1 a_2)) = c(2 a_3) = 2a_3 \neq 0 = c(0 \cdot a_2) = c(c(2 a_1) a_2). \]
In this case there are no test equations of the form \eqref{nileqalg::test1} and there are two test equations of the form \eqref{nileqalg::test3}.
For these we compute
\[ c(2 c(a_1 a_1)) = 0 = c(a_1 c(2a_1)) \]
and 
\[ c(2 c(a_2 a_1)) = c(2 \cdot 0) = 0 = c(a_2 \cdot 0) = c(a_2 c(2 a_1)). \]
\end{example}

\begin{example}
For the presentation
\[ \langle a_1, a_2, a_3 \mid 2a_1 = 0, a_2 a_1 = a_3, \mbox{ all other products trivial } \rangle \]
only Equations of the form \eqref{nileqalg::test3} fail. We have $w(a_1)=w(a_2)=1, w(a_3)=2$ and $r_1=2,r_2=r_3=\infty$. We compute
\[ c(2 c(a_2 a_1)) = c(2 a_3) = 2a_3 \neq 0 = c(a_2 \cdot 0) = c(a_2 c(2 a_1)). \]
In this case there are no test equations of the form \eqref{nileqalg::test1} and there are two test equations of the form \eqref{nileqalg::test2}.
For these we compute
\[ c(2 c(a_1 a_1)) = 0 = c(c(2a_1)a_1) \]
and 
\[ c(2 c(a_1 a_2)) = c(2 \cdot 0) = 0 = c(0 \cdot a_2) = c(c(2 a_1) a_2). \]
\end{example}
\end{document}